\documentclass[11pt,reqno]{amsart}
\usepackage{color}
\usepackage[english]{babel}
\usepackage{amsfonts}
\usepackage{amsmath}
\usepackage{amsthm}
\usepackage{amssymb}
\usepackage[misc]{ifsym}
\usepackage{bbm}
\usepackage{mathrsfs}
\usepackage{esint}
\usepackage{faktor}
\usepackage[utf8]{inputenc}
\usepackage{afterpage}
\usepackage[left=2.9cm,right=2.9cm,top=2.8cm,bottom=2.8cm]{geometry}
\usepackage{graphicx}
\usepackage{enumitem}
\usepackage[dvipsnames]{xcolor}
\usepackage[colorlinks=true,urlcolor=blue,citecolor=blue,linkcolor=blue,linktocpage,pdfpagelabels,bookmarksnumbered,bookmarksopen]{hyperref}

\newcommand{\N}{\mathbb{N}}

\newcommand{\R}{\mathbb{R}}
\newcommand{\A}{\mathcal{A}}
\newcommand{\D}{\mathcal{D}}
\newcommand{\F}{\mathcal{F}}
\newcommand{\G}{\mathcal{G}}
\newcommand{\J}{\mathcal{J}}
\newcommand{\K}{\mathcal{K}}
\newcommand{\M}{\mathcal{M}}

\newcommand{\dx}{\, {\rm d} x}

\newcommand{\dt}{\, {\rm d} t}

\newcommand{\dtau}{\, {\rm d} \tau}

\newcommand{\eps}{\varepsilon}
\newcommand{\loc}{{\rm loc}}
\newcommand{\Gr}{{\rm Gr}}

\newtheorem{lemma}{Lemma}
\newtheorem{thm}[lemma]{Theorem}
\newtheorem{prop}[lemma]{Proposition}

\theoremstyle{definition}
\newtheorem{defi}[lemma]{Definition}
\newtheorem{rmk}[lemma]{Remark}

\DeclareMathOperator*{\esssup}{ess \, sup}
\DeclareMathOperator*{\essinf}{ess \, inf}
\DeclareMathOperator*{\supp}{supp}
\DeclareMathOperator*{\diam}{diam}
\DeclareMathOperator*{\dom}{dom}

\begin{document}
\title[The obstacle problem for singular quasi-linear elliptic equations]{The obstacle problem for \\ singular quasi-linear elliptic equations}

\author[A. Barbagallo]{Annamaria Barbagallo}
\address[A. Barbagallo]{Dipartimento di Matematica e Applicazioni ``Renato Caccioppoli'', Università di Napoli Federico II, Via Cinthia, 80126 Napoli, Italy}
\email{annamaria.barbagallo@unina.it}

\author[U. Guarnotta]{Umberto Guarnotta}
\address[U. Guarnotta]{Dipartimento di Matematica e Informatica, Università degli Studi di Catania, Viale A. Doria 6,
95125 Catania, Italy}
\email{umberto.guarnotta@unict.it}

\begin{abstract}
Existence of solutions to an obstacle $p$-Laplacian problem exhibiting a singular, discontinuous reaction is proved. The reaction term may be discontinuous in a Lebesgue-negligible set. Moreover, solutions are shown to be locally $C^{1,\alpha}$ far away from the contact set. Under a differentiability hypothesis on the obstacle, solutions belong to $C^{1,\alpha}(\overline{\Omega})$.
\end{abstract}

\maketitle

{
\let\thefootnote\relax
\footnote{{\bf{MSC 2020}}: 35J87, 49J40, 49J52, 35J92.}
\footnote{{\bf{Keywords}}: obstacle problem, discontinuous reaction, singular term, pseudomonotone operator, Lewy-Stampacchia inequality, locality property.}
\footnote{\Letter \quad Corresponding author: Annamaria Barbagallo (annamaria.barbagallo@unina.it).}
}
\setcounter{footnote}{0}

\section{Introduction}
Let $\Omega\subseteq \R^N$, $N\geq 2$, be a bounded domain of class $C^2$. We consider a Borel-measurable function $f:(0,+\infty)\to[0,+\infty)$ (called {\it reaction}) and a Lebesgue-measurable function $\phi:\Omega\to\R$ (called {\it obstacle}). The functions $f$ and $\phi$ satisfy, respectively, the hypotheses \hyperlink{Hf}{$({\rm H}_f)$}--\ref{Hphi} below (see Section 2). Given $p\in(1,\infty)$, we consider the set
\begin{equation}
\label{Kdef}
K := \{u\in W^{1,p}_0(\Omega): \, u\geq\phi \ \mbox{a.e. in} \ \Omega\}.
\end{equation}
In addition, we define
\begin{equation}
\label{subsupf}
\underline{f}(s):=\lim_{\delta\to 0^+}\essinf_{|t-s|<\delta} f(t)\quad\mbox{and}\quad
\overline{f}(s):=\lim_{\delta\to 0^+}\esssup_{|t-s|<\delta} f(t)\quad \mbox{for all} \ s\in(0,+\infty).
\end{equation}

The paper is devoted to existence of positive (a.e.\,in $\Omega$) solutions $u\in K$ to the inequality
\begin{equation}
\label{prob}
\tag{P}
\int_\Omega |\nabla u|^{p-2}\nabla u \nabla(v-u) \dx \geq \int_\Omega \eta(v-u) \dx \quad \mbox{for all} \ v\in K,
\end{equation}
where $\eta\in W^{-1,p'}(\Omega)\cap L^1_\loc(\Omega)$ is a suitable function satisfying
$$ \underline{f}(u(x)) \leq \eta(x) \leq \overline{f}(u(x)) \quad \mbox{for a.a.} \ x\in\Omega. $$
In addition, $C^{1,\alpha}$ regularity results for solutions to \eqref{prob} will be proved: see Theorem \ref{mainthm} below for a precise statement.

To the best of our knowledge, this is the first paper concerning existence of solutions to obstacle problems with singular reaction terms (i.e., $f(s)\to+\infty$ as $s\to 0^+$ is allowed). In addition, the reaction term considered here may be also discontinuous, leading to a singular variational-hemivariational problem.

The singular behavior of the nonlinearity compels us to employ truncation arguments, while the presence of both the discontinuity and the obstacle requires the usage of non-smooth calculus and convex analysis. Even more, the interplay between singularity and discontinuities casts the problem off the standard theory of non-smooth calculus for integral operators, so that a generalization of the latter is needed. More precisely, the well-known Aubin-Clarke theorem (see, e.g., \cite[Theorem 2.181]{CLM}) furnishes an expression on the Clarke generalized gradient of functionals of type
$$ \G(u):=\int_\Omega G(x,u(x)) \dx,  $$
where $G:\Omega\times\R\to\R$ is such that $G(\cdot,s)$ is Lebesgue-measurable for all $s\in\R$ and it satisfies either, for some $k\in L^{p'}(\Omega)$,
$$ |G(x,s_1)-G(x,s_2)| \leq k(x)|s_1-s_2| \quad \mbox{for a.a.} \ x\in\Omega \ \mbox{and for all} \ s_1,s_2\in\R, $$
or $G(x,\cdot)$ is locally Lipschitz continuous for a.a.\,$x\in\Omega$ and, for suitable $h\in L^{p'}(\Omega)$ and $c>0$,
$$ |t| \leq h(x)+c|s|^{p-1} \quad \mbox{for a.a.} \ x\in\Omega, \ \mbox{for all} \ s\in\R, \mbox{and for all} \ t\in\partial_s G(x,s), $$
being $\partial_s G(x,s)$ the generalized Clarke gradient of $G(x,\cdot)$. Unfortunately, dealing with singular problems with $f(s)\simeq s^{-\gamma}$ leads to $h(x)\simeq ({\rm dist}(x,\partial\Omega))^{-\gamma}$, so that in general $h\notin L^{p'}(\Omega)$ (cf. Proposition \ref{generalgrad} and Remark \ref{ggrowth} below).

Another issue, which appears also in the absence of the obstacle (i.e., $\phi\equiv 0$), is that the generalized critical points of the functional
$$ J(u):=\frac{1}{p}\int_\Omega |\nabla u|^p \dx - \int_\Omega F(u(x)) \dx, \quad u\in W^{1,p}_0(\Omega), $$
being $F:\R\to\R$ a locally Lipschitz continuous function, do not solve the Euler-Lagrange equations for $J$, but they are merely solutions of the differential inclusion $-\Delta_p u \in \partial F(u)$. If $F$ is a primitive of a function $f:\R\to\R$ fulfilling `good' growth conditions and having a Lebesgue-negligible set of discontinuity points, then the locality property of the $p$-Laplacian guarantees that $u$ is a strong solution to $-\Delta_p u = f(u)$, meaning that this differential equation is satisfied almost everywhere in the pointwise sense; cf. Definition \ref{strongsoldef} and Proposition \ref{stronglocal} below. For an account of this problem in the singular setting, we address the reader to \cite{GM,GM2}.

Motivated by these remarks, we argue as follows:
\begin{enumerate}
\item we truncate the reaction term at the level of a subsolution to $-\Delta_p u = \underline{f}(u)$, in order to avoid the possible singularity of $f$ (see Proposition \ref{subsollemma});
\item we generalize the Aubin-Clarke theorem to integral functionals whose integrand is a primitive of a non-negative function $g(x,s)\lesssim|s|^{p-1}+({\rm dist}(x,\partial\Omega))^{-\gamma}$, $(x,s)\in\Omega\times\R$ (see Proposition \ref{generalgrad});
\item we get existence of a solution to the truncated variational-hemivariational inequality via pseudomonotone operator theory, and this solution turns out to solve also \eqref{prob} (see Theorem \ref{auxexistence});
\item we prove that, far away from the contact set $\{u=\phi\}$, the solution obtained is locally $C^{1,\alpha}$-regular and strongly solves $-\Delta_p u = f(u)$ (see Definition \ref{contactdef} and Theorem \ref{locregthm});
\item we show that, if $-\Delta_p \phi_+ \leq \Phi \in L^q(\Omega)$ for some $q>N$, then a Lewy-Stampacchia type inequality (see Proposition \ref{lewystampacchia}) ensures that the solution found belongs to $C^{1,\alpha}(\overline{\Omega})$ (see Theorem \ref{globregthm}).
\end{enumerate}

We also highlight that both singular problems and obstacle problems naturally arise in physics, engineering, and chemistry. To provide some examples, we recall that singular problems serve as models for heat conduction in electrically conducting materials and stem from the study of chemical catalysts; meanwhile, obstacle problems describe equilibrium configurations under constraints, as occurs in contact problems in elasticity and in phase transition models. For the state of the art concerning singular problems, we refer the reader to the surveys \cite{GLM1,GLM2}, while for obstacle problems we address to the monograph \cite{KS} (we also refer to \cite{K,F,Lind} for some regularity results). 
Incidentally, we also report the pioneering paper \cite{C}, devoted to non-smooth calculus and its applications to PDEs. We also mention the inspiring article \cite{KP}, which investigates a variational-hemivariational inequality associated with an obstacle $p$-Laplacian problem exhibiting discontinuous reaction term.

The rest of the paper is organized as follows. Section \ref{S:obstacle} provides the statement of the main result. Then, Section \ref{S:preliminaries} is devoted to some preliminary facts about nonlinear operator theory, non-smooth differential calculus, and variational inequalities. Finally, Section \ref{S:mainres} pertains the proof of Theorem \ref{mainthm} below.

\section{The obstacle problem}
\label{S:obstacle}

\subsection*{Notation.}
Let $\M(\Omega)$ be the set of the Lebesgue-measurable functions on $\Omega$. If $f\in\M(\Omega)$, then $f_+:=\max\{f,0\}$ will denote the positive part of $f$. If $g,h\in \M(\Omega)$, we will write $g\leq h$ in $\Omega$ to signify $g(x)\leq h(x)$ for almost all $x\in\Omega$.
Anyway, to avoid technicalities, we will write `for all $x\in\Omega$' instead `for almost all $x\in\Omega$' when the meaning is clear. \\
We will abbreviate with $d$ the distance function on $\Omega$, that is, 
$$ d(x):={\rm dist}(x,\partial\Omega) \quad {\rm for \ all} \ x\in\Omega. $$
Given any distribution $\mu$ on $\Omega$, its support will be denoted by $\supp \mu$. \\
The symbol $\langle \cdot,\cdot \rangle_X$ stands for the duality pairing between the Banach space $X$ and its topological dual $X^*$; the subscript is omitted when no confusion can arise. \\
For any open set $A\subseteq\R^N$, the symbol $C^\infty_c(A)$ indicates the set of the infinitely differentiable functions having compact support in $A$. The space $W^{1,p}_0(\Omega)$ is the closure of $C^\infty_c(\Omega)$ with respect to the standard Sobolev norm. Taking into account the Poincaré inequality, hereafter $W^{1,p}_0(\Omega)$ will be endowed with the equivalent norm $\|u\|_{1,p}:=\||\nabla u|\|_p$, being $\|\cdot\|_p$ the classical Lebesgue norm. When $u\in W^{1,p}_0(\Omega)$, we will abbreviate $\||\nabla u|\|_p$ as $\|\nabla u\|_p$. The dual space of $W^{1,p}_0(\Omega)$ will be denoted by $W^{-1,p'}(\Omega)$, being $p':=\frac{p}{p-1}$. If $\eta\in W^{-1,p'}(\Omega)$ is a distribution which can be represented by a function $v\in L^q(\Omega)$, $q\geq 1$, we will simply write $v\in W^{-1,p'}(\Omega)\cap L^q(\Omega)$, identifying $\eta$ with $v$.

Let $\J:X\to\R$ be a functional defined on the Banach space $(X,\|\cdot\|)$. Given any $u,v\in X$, we indicate with $\J^0(u;v)$ the generalized directional derivative of $J$ at $u$ in the direction of $v$, that is,
$$ \J^0(u;v):=\limsup_{\substack{w\to u \\ t\to 0^+}} \frac{\J(w+tv)-\J(w)}{t}. $$
For every $u\in X$, $\partial \J(u)$ will denote the Clarke generalized gradient of $\J$ at $u$, i.e.,
$$ \partial \J(u) := \left\{ \xi\in X^*: \ \J^0(u;v) \geq \langle \xi,v \rangle \ \mbox{for all} \ v\in X \right\}. $$
If $\J$ is locally Lipschitz continuous, then $\J^0(u;v)$ is finite and $\partial\J(u)\neq\emptyset$ for all $u,v\in X$. \\ 
Let $\F:X\to\R\cup\{+\infty\}$ be a convex functional. For any $u\in \dom(\F)$ (i.e., $\F(u)<+\infty$), we define $D\F(u)$ as the subdifferential of $\F$ at $u$, that is,
$$ D\F(u) := \left\{\eta\in X^*: \ \F(v)-\F(u) \geq \langle \eta,v-u \rangle \ \mbox{for all} \ v\in X \right\}. $$
If $u\notin \dom(\F)$ then we posit $D\F(u)=\emptyset$. Incidentally, we recall that the Clarke generalized gradient and the subdifferential agree for convex, continuous functionals (see \cite[Example 2.169]{CLM}). \\
In the sequel, $\K:W^{1,p}_0(\Omega)\to[0,+\infty]$ will denote the indicator function of the set $K$ defined in \eqref{Kdef}, i.e.,
\begin{equation}
\label{Kcaldef}
\mathcal{K}(u):=\begin{cases}
0, &\mbox{if} \ u\in K, \\
+\infty, &\mbox{if} \ u\notin K.
\end{cases}
\end{equation}

\subsection*{Main results}

Let $f:(0,+\infty)\to[0,+\infty)$ and $\phi:\Omega\to\R$ be a Borel-measurable and a Lebesgue-measurable functions, respectively. Hereafter, $\lambda_1>0$ is the first eigenvalue of $(-\Delta_p,W^{1,p}_0(\Omega))$, while $\D_f\subseteq\R^+:=(0,+\infty)$ is the set of discontinuity points of $f$.

We assume:
\begin{enumerate}[label={$({\rm H}_f){\rm (\roman*)}$},ref={$({\rm H}_f){\rm (\roman*)}$},itemsep=1em,topsep=1em]
\hypertarget{Hf}{}
\item \label{localbound} $f\in L^\infty_\loc(\R^+)$;
\item \label{singular}
there exists $\gamma\in(0,1)$ such that $\displaystyle{\limsup_{s\to 0^+}}\, s^\gamma f(s)< +\infty$;
\item \label{subsol}
$\displaystyle{\liminf_{s\to 0^+}}\frac{f(s)}{s^{p-1}}>\lambda_1$;
\item\label{sublinear}
$\displaystyle{\limsup_{s\to+\infty}}\frac{f(s)}{s^{p-1}}<\lambda_1$;
\item \label{zeromeasure}
$\D_f$ has null Lebesgue measure;
\item \label{fzero}
$\underline{f}(s)=0,\ s\in\R^+\implies f(s)=0$;
\end{enumerate}
\begin{enumerate}[label={$({\rm H}_\phi)$},ref={$({\rm H}_\phi)$},itemsep=1em,topsep=1em]
\item \label{Hphi} $\phi_+\in W^{1,p}_0(\Omega)$.
\end{enumerate}

Before stating the main result of the paper, we introduce some definitions (cfr. \cite[Definition 6.7, p.43]{KS} and \cite[p.219]{GT}).

\begin{defi}
\label{contactdef}
Let $u\in W^{1,p}(\Omega)$. Given any $x_0\in\Omega$, we say that $u(x_0)>0$ in the sense of $W^{1,p}$ if there exists $\rho>0$ and a non-negative $h\in C^\infty_c(B_\rho(x_0))$ such that $h(x_0)>0$ and $u-h\geq 0$ in $B_\rho(x_0)$.
\end{defi}
We will denote by $[u>0]$ the set of the points $x\in\Omega$ such that $u(x)>0$ in the sense of $W^{1,p}$. We also set $[u<0]:=[-u>0]$ and $[u=0]:=\Omega\setminus([u>0]\cup[u<0])$. Moreover, for all $u,v\in W^{1,p}(\Omega)$, we define $[u>v]:=[u-v>0]$ (analogous definitions hold for $[u<v]$ and $[u=v]$). Notice that the sets $[u>v]$ and $[u<v]$ are open, while $[u=v]$ is closed.

\begin{defi}
\label{strongsoldef}
Let $\A\subseteq\Omega$ be an open set. Suppose $f:(0,+\infty)\to\R$ to be a Lebesgue-measurable function. A function $u\in W^{1,p}(\A)$ is said to be a strong solution to
$$ -\Delta_p u = f(u) \quad \mbox{in} \ \A $$
if $|\nabla u|^{p-2}\nabla u\in W^{1,1}_\loc(\A)$, $u>0$ a.e.\,in $\A$, and
$$ -\Delta_p u(x)=f(u(x)) \quad \mbox{a.e. in} \ \A. $$
\end{defi}

\begin{thm}
\label{mainthm}
Let \hyperlink{Hf}{$({\rm H}_f)$}--\ref{Hphi} be satisfied. Then problem \eqref{prob} admits a positive (a.e.\,in $\Omega$) solution $u\in K$, being $K$ as in \eqref{Kdef}. Moreover, setting $\A:=[u>\phi]$, the solution $u$ fulfills the following properties:
\begin{itemize}
\item $u\in C^{1,\alpha}_\loc(\A)$;
\item $|\nabla u|^{p-2}\nabla u\in W^{1,2}_\loc(\A)$;
\item $u$ strongly solves $-\Delta_p u = f(u)$ in $\A$.
\end{itemize}
If, in addition, $-\Delta_p \phi_+ \leq \Phi$ in $\Omega$ for some $\Phi\in L^q(\Omega)$, $q>N$, then $u\in C^{1,\alpha}(\overline{\Omega})$.
\end{thm}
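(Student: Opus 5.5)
The proof will follow the five-step roadmap of the Introduction.

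\emph{Sub-solution.} First, \ref{subsol} and \ref{singular} give a sub-solution of $-\Delta_p u=\underline{f}(u)$. I would take $\underline{u}:=\eps\varphi_1$, where $\varphi_1$ is the positive first eigenfunction, and $\eps>0$ is small (Proposition \ref{subsollemma}). Since $\varphi_1\simeq d$ by Hopf's lemma and $C^{1,\alpha}$ regularity, we get $\underline{u}\ge c\,d$ in $\Omega$.

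\emph{Truncation and existence.} Next, truncate the reaction:
$$ g(x,s):=f(\max\{s,\underline{u}(x)\}). $$
By \ref{singular}, \ref{localbound} and \ref{sublinear},
$$ 0\le g(x,s)\le C\bigl(d(x)^{-\gamma}+|s|^{p-1}\bigr). $$
Let $G(x,s)=\int_0^s g(x,t)\,{\rm d}t$ and $\G(u)=\int_\Omega G(x,u)\dx$. Proposition \ref{generalgrad} then describes $\partial\G(u)$. Its elements $\eta$ satisfy $\underline{f}(\max\{u,\underline{u}\})\le\eta\le\overline{f}(\max\{u,\underline{u}\})$, and they lie in $W^{-1,p'}$ by Hardy's inequality, since $\int d^{-\gamma}|v|\le C\|\nabla v\|_p$ for $\gamma<1$.

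I would then solve the variational-hemivariational inequality: find $u\in K$ and $\eta\in\partial\G(u)$ with
$$ -\Delta_p u-\eta\in D\K(u). $$
The tool is pseudomonotone operator theory (Theorem \ref{auxexistence}). The map $u\mapsto-\Delta_p u-\partial\G(u)$ is a pseudomonotone multifunction, using compactness of $\partial\G$ via the growth bound. Coercivity on $K$ (nonempty, since $\phi_+\in K$ by \ref{Hphi}) comes from \ref{sublinear}: $\int G(x,u)\le\frac{\lambda_1-\delta}{p}\|u\|_p^p+C\|\nabla u\|_p$.

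\emph{Removing the truncation.} Then show $u\ge\underline{u}$. This is where I expect the main obstacle. The obstacle forbids testing with arbitrary functions, but $v=u+(\underline{u}-u)_+=\max\{u,\underline{u}\}$ stays in $K$. Testing the inequality with this $v$, and subtracting the sub-solution inequality for $\underline{u}$ on $\{u<\underline{u}\}$, gives
$$ \int_{\{u<\underline u\}}\bigl(|\nabla\underline u|^{p-2}\nabla\underline u-|\nabla u|^{p-2}\nabla u\bigr)\cdot\nabla(\underline u-u)\le0. $$
The reaction terms compare because $g(x,u)=f(\underline{u})\ge\underline{f}(\underline{u})$ there, where $\max\{u,\underline u\}=\underline u$. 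Hence $u\ge\underline{u}>0$. On that set $g(x,u)=f(u)$ and $\underline f(u)\le\eta\le\overline f(u)$, so $u$ solves \eqref{prob}.

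\emph{Local regularity in $\A$.} On $\A=[u>\phi]$, locally $u-h\ge\phi$ for some bump $h\ge0$. So $v=u\pm t\psi$ with $\psi\in C^\infty_c$ supported where that holds and $t$ small lies in $K$. Hence $-\Delta_p u=\eta$ in $\mathcal D'(\A)$. On compact subsets of $\A$, $u\ge\underline{u}\ge c>0$, so $\eta\in L^\infty_\loc$ by \ref{localbound}. Standard regularity (DiBenedetto, Tolksdorf) then gives $u\in C^{1,\alpha}_\loc(\A)$. Second-order results (Lou; Cianchi--Maz'ya) give $|\nabla u|^{p-2}\nabla u\in W^{1,2}_\loc(\A)$, hence the field is in $W^{1,1}_\loc$ and $-\Delta_p u=\eta$ a.e.

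To upgrade to $\eta=f(u)$ a.e., use Proposition \ref{stronglocal}. On $u^{-1}(\D_f)$ the locality property gives $\nabla u=0$ a.e. Moreover $\nabla(|\nabla u|^{p-2}\nabla u)=0$ a.e. on $\{\nabla u=0\}$, so $\eta=0$ there. By \ref{fzero} and $\underline f\le\eta$, this forces $f(u)=0=\eta$. Off that set $f$ is continuous at $u(x)$, so $\underline f=\overline f=f$ there.

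\emph{Global regularity.} Under $-\Delta_p\phi_+\le\Phi$, a Lewy--Stampacchia inequality (Proposition \ref{lewystampacchia}) gives
$$ \eta\le-\Delta_p u\le\max\{\eta,\Phi\} $$
in the sense of measures. So $-\Delta_p u\in L^1_\loc$ with $|{-\Delta_p u}|\le C(d^{-\gamma}+|\Phi|)$, using $u\ge c\,d$ for the singular part. Regularity for $p$-Laplacian equations with right-hand side bounded by $d^{-\gamma}$, $\gamma<1$, plus an $L^q$ term with $q>N$ (Lieberman / Giacomoni--Schindler--Takáč type) then gives $u\in C^{1,\alpha}(\overline\Omega)$.
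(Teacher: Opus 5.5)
Your route is the paper's own, step for step: the small first eigenfunction as subsolution, the truncation $g(x,s)=f(\max\{\underline u(x),s\})$, the singular Aubin--Clarke description of $\partial\G$, surjectivity for the pseudomonotone $-\Delta_p-\partial\G$ plus the maximal monotone $D\K$, comparison through $v=\max\{u,\underline u\}\in K$, localization on $[u>\phi]$, strong locality combined with \ref{fzero}, and Lewy--Stampacchia. The gap is that you never show $u$ is bounded, yet you use boundedness twice.

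\emph{Interior step.} You infer $\eta\in L^\infty_\loc(\A)$ from \ref{localbound} and $u\ge c>0$. But \ref{localbound} only controls $f$ on compact subsets of $(0,+\infty)$, and by \ref{sublinear} $f$ may grow like $s^{p-1}$ at infinity. What you actually have on $\A'\Subset\A$ is $0\le\eta\le c_1u^{p-1}+c_2\,{\rm dist}(\A',\partial\Omega)^{-\gamma}$, with $u$ only in $W^{1,p}_0(\Omega)$. This is easily repaired: the right-hand side has natural growth, so Serrin--Moser iteration first gives $u\in L^\infty_\loc(\A)$. After that, $\eta\in L^\infty_\loc(\A)$, and DiBenedetto/Tolksdorf and the second-order estimates apply as you say.

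\emph{Global step.} Here the omission is more serious. Lewy--Stampacchia together with $u\ge c\,d$ gives $0\le-\Delta_p u\le\Phi_++c_1u^{p-1}+c_2d^{-\gamma}$, not $|{-\Delta_p u}|\le C(d^{-\gamma}+|\Phi|)$. The term $c_1u^{p-1}$ can be dropped only once $u\in L^\infty(\Omega)$ is known. The boundary $C^{1,\alpha}$ results you invoke (Lieberman, Giacomoni--Schindler--Tak\'a\v{c}) presuppose bounded solutions anyway.

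That $L^\infty$ bound is not automatic: $d^{-\gamma}\in L^r(\Omega)$ only for $r<1/\gamma$, which can lie below the threshold $N/p$ of the usual Stampacchia estimate. The paper handles this as follows.
\begin{enumerate}
\item It writes $-\Delta_p u=\Gamma(\Phi+c_1u^{p-1}+c_2d^{-\gamma})$ with a non-negative $\Gamma\in L^\infty(\Omega)$.
\item It moves the singular part into the principal part through $-\Delta U=c_2\Gamma d^{-\gamma}$, $U\in C^{1,\alpha}_0(\overline\Omega)$, so that $-{\rm div}(|\nabla u|^{p-2}\nabla u-\nabla U)=\Gamma(\Phi+c_1u^{p-1})$.
\item It obtains $u\in L^\infty(\Omega)$ from \cite[Theorem 3.1]{BCM}, and only then applies Lieberman's theorem.
\end{enumerate}
A De Giorgi iteration in which $\int_\Omega d^{-\gamma}(u-k)_+\dx$ is controlled by Hardy's inequality, as if it were a bounded divergence term, would serve equally well. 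You need to add one of these.

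A minor point on existence: Theorem \ref{surjective} needs $u_0$-coercivity of the operator. That means a bound on $\langle w,v-u_0\rangle$ for $w\in\partial\G(v)$, not on $\int_\Omega G(x,v)\dx$. It follows from the same pointwise bound on $\overline{g_x}$ with $c_1<\lambda_1$.
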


It is worth noticing that hypothesis \hyperlink{Hf}{$({\rm H}_f)$} consists of the assumptions made in \cite{GM}, where the existence of regular solutions to \eqref{prob} in the absence of the obstacle (i.e., $\phi\equiv 0$) is investigated. Accordingly, from the existence point of view, Theorem \ref{mainthm} generalizes \cite[Theorem 1.3]{GM}. However, unlike \cite{GM}, where solutions belong to $C^{1,\alpha}(\overline{\Omega})$ and strongly solve $-\Delta_p u = f(u)$ in $\Omega$, solutions to \eqref{prob} will be of class $C^{1,\alpha}$ and strongly solve $-\Delta_p u = f(u)$ only far away from the contact set $\{u=\phi\}$. This is a classical phenomenon, due to the presence of the obstacle: see, e.g., \cite[p.105]{KS}. On the other hand, under the differentiability hypothesis $-\Delta_p \phi_+\in L^q(\Omega)$, $q>N$ (or the more general assumption stated in Theorem \ref{mainthm}), we will be able to recover the global $C^{1,\alpha}$ regularity of solutions.

\section{Preliminaries}\label{S:preliminaries}
In this section we collect some basic results concerning non-smooth differential calculus, convex analysis, and theory of variational-hemivariational inequalities.

Firstly, we introduce some monotonicity and coercivity properties for nonlinear operators. The following definitions are patterned after \cite[Definitions 2.118, 2.120, 2.121]{CLM} (cf. also \cite[Propositions 2.122-2.123]{CLM}).
\begin{defi}
Let $(X,\|\cdot\|)$ be a reflexive Banach space.
\begin{itemize}
\item An operator $A:X\to X^*$ is said to be {\it monotone} if
$$ \langle A(u)-A(v),u-v \rangle \geq 0 \quad \mbox{for all} \ u,v\in X. $$
\item An operator $A:X\to X^*$ is said to be {\it strictly monotone} if
$$ \langle A(u)-A(v),u-v \rangle > 0 \quad \mbox{for all} \ u,v\in X, \ u\neq v. $$
\item An operator $A:X\to X^*$ is said to be {\it pseudomonotone} if, for any $u_n\rightharpoonup u$ in $X$ and such that
$$ \limsup_{n\to\infty} \langle A(u_n),u_n-u \rangle \leq 0,$$
one has
$$ \langle Au,u-v \rangle \leq \liminf_{n\to\infty} \langle A(u_n),u_n-v \rangle \quad \mbox{for all} \ v\in X. $$
\item A multi-valued bounded operator $A:X\to 2^{X^*}$ is said to be (generalized) {\it pseudomonotone} if $A(u)$ is non-empty, closed, and convex for all $u\in X$ and, given any $u_n\rightharpoonup u$ in $X$ and $u_n^*\rightharpoonup u^*$ in $X^*$ fulfilling $u_n^*\in A(u_n)$ for all $n\in\N$ and
$$ \limsup_{n\to\infty} \langle u_n^*,u_n-u \rangle \leq 0,$$
one has $u^*\in A(u)$ and $\langle u_n^*,u_n \rangle \to \langle u^*,u \rangle$ as $n\to\infty$.
\item A multi-valued operator $A:X\to 2^{X^*}$ is said to be {\it maximal monotone} if the graph $\Gr(A)$ of $A$ is characterized as follows: $(u,u^*)\in\Gr(A)$ if and only if
$$ \langle u^*-v^*,u-v \rangle \geq 0 \quad \mbox{for all} \ (v,v^*)\in\Gr(A). $$
\item Given any $u_0\in X$, a multi-valued operator $A:X\to 2^{X^*}$ is said to be $u_0$-coercive if there exists $c:\R^+\to\R$ (possibly depending on $u_0$) such that
\begin{enumerate}[label={{\rm (\roman*)}}]
\item $c(r)\to+\infty$ as $r\to+\infty$;
\item $\langle u^*,u-u_0\rangle \geq c(\|u\|)\|u\|$ for all $u\in X\setminus\{0\}$ and $u^*\in A(u)$.
\end{enumerate}
\end{itemize}
\end{defi}

Now we state a surjectivity theorem for maximal monotone operators, which is a special case of \cite[Theorem 2.127]{CLM}.

\begin{thm}
\label{surjective}
Let $(X,\|\cdot\|)$ be a reflexive Banach space, $A:X\to 2^{X^*}$ be a bounded pseudomonotone operator, and $B:X\to 2^{X^*}$ be a maximal monotone operator. Suppose also that $A$ is $u_0$-coercive for some $u_0\in\dom(B)$. Then $A+B$ is surjective, i.e., for any $u^*\in X^*$ there exists $u\in X$ such that $u^*\in (A+B)(u)$.
\end{thm}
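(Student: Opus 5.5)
The statement is a direct specialization of \cite[Theorem 2.127]{CLM}, so the most economical proof is to check that its hypotheses coincide with those assumed here. For completeness, here is how I would reconstruct the argument from scratch. First I reduce to the case $u^*=0$. Replace $A$ by $\tilde A(u):=A(u)-u^*$. This operator is still bounded and pseudomonotone, because subtracting a constant functional preserves nonemptiness, closedness and convexity of the values, and it does not affect the limsup condition beyond a term $\langle u^*,u_n-u\rangle\to 0$. It is still $u_0$-coercive, since $\langle u^*,u-u_0\rangle\ge -\|u^*\|(\|u\|+\|u_0\|)$, which changes $c(r)$ by a bounded amount of order $r^{-1}$ times $r$ and so keeps $c(r)\to+\infty$. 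It therefore suffices to find $u$ with $0\in A(u)+B(u)$.

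Next I regularize $B$. By the Asplund--Troyanski renorming theorem, I may assume that $X$ and $X^*$ are strictly convex. The duality map $J$ is then single-valued, and for $\lambda>0$ the Yosida approximation $B_\lambda$ of $B$ is defined. It is single-valued, bounded, monotone and demicontinuous, and for $u_0\in\dom(B)$ it satisfies $\|B_\lambda u_0\|\le \|B^0u_0\|$, where $B^0u_0$ is the element of minimal norm in $B(u_0)$. Consequently $A+B_\lambda$ is bounded and pseudomonotone, since the sum of a pseudomonotone operator and a demicontinuous bounded monotone one is pseudomonotone. It is also $u_0$-coercive, uniformly in $\lambda$, because
\[
\langle B_\lambda u,u-u_0\rangle\ge\langle B_\lambda u_0,u-u_0\rangle\ge -\|B^0u_0\|(\|u\|+\|u_0\|).
\]
The surjectivity theorem for coercive bounded pseudomonotone operators (Brezis), proved via Galerkin approximation and Brouwer's fixed point theorem, then yields $u_\lambda$ and $\xi_\lambda\in A(u_\lambda)$ with $\xi_\lambda+B_\lambda u_\lambda=0$.

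The main obstacle is passing to the limit as $\lambda\to0$. The uniform coercivity gives a bound on $\|u_\lambda\|$. Boundedness of $A$ then bounds $\xi_\lambda$, and hence $B_\lambda u_\lambda=-\xi_\lambda$. Set $J_\lambda u_\lambda:=u_\lambda-\lambda J^{-1}(B_\lambda u_\lambda)$, which satisfies $B_\lambda u_\lambda\in B(J_\lambda u_\lambda)$ and $\|u_\lambda-J_\lambda u_\lambda\|\to0$. Along a subsequence I have $u_\lambda\rightharpoonup u$, $J_\lambda u_\lambda\rightharpoonup u$, $\xi_\lambda\rightharpoonup\xi$ and $B_\lambda u_\lambda\rightharpoonup-\xi$. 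The key step is to verify
\[
\limsup_{\lambda\to0}\langle\xi_\lambda,u_\lambda-u\rangle\le0.
\]
For this I first fix some $\eta\in B(u)$; if none is available, I argue with an arbitrary $(v,v^*)\in\Gr(B)$ and Minty's trick. Monotonicity of $B$ gives
\[
\langle -\xi_\lambda,J_\lambda u_\lambda-u\rangle\ge\langle \eta,J_\lambda u_\lambda-u\rangle\to0,
\]
and the difference between $J_\lambda u_\lambda$ and $u_\lambda$ is negligible. Hence $\limsup\langle\xi_\lambda,u_\lambda-u\rangle\le0$.

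Generalized pseudomonotonicity of $A$ then gives $\xi\in A(u)$ and $\langle\xi_\lambda,u_\lambda\rangle\to\langle\xi,u\rangle$. Therefore $\limsup\langle B_\lambda u_\lambda,J_\lambda u_\lambda\rangle\le\langle-\xi,u\rangle$. Since a maximal monotone graph is closed under this weak--weak limsup condition (again by Minty's lemma), I obtain $-\xi\in B(u)$, that is, $0\in(A+B)(u)$. To avoid assuming that $B(u)\neq\emptyset$ in the key step, I would instead derive the limsup inequality by testing against $(u_0,B^0u_0)$ combined with the coercivity identity, which is the delicate point I would check most carefully.
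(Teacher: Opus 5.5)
Your main route is the same as the paper's. The paper gives no proof and simply invokes \cite[Theorem 2.127]{CLM}, of which the statement is a special case.

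Your Yosida-approximation reconstruction is the standard argument and is sound, except at its key step. As written, you fix $\eta\in B(u)$, which presupposes $u\in\dom(B)$; that is exactly what is unknown until the end. The Minty fallback you mention is the right fix, and it handles both the limsup inequality and the final inclusion at once.

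Along your subsequence, pass to a further one with $\langle\xi_\lambda,u_\lambda\rangle\to\ell:=\limsup\langle\xi_\lambda,u_\lambda\rangle$. Apply monotonicity of $B$ to the pair $(J_\lambda u_\lambda,-\xi_\lambda)\in\Gr(B)$ and an arbitrary $(v,v^*)\in\Gr(B)$. Since $\|u_\lambda-J_\lambda u_\lambda\|=\lambda\|\xi_\lambda\|\to0$, passing to the limit gives
\[
\langle -\xi-v^*,\,u-v\rangle\;\geq\;\ell-\langle\xi,u\rangle\qquad\mbox{for all }(v,v^*)\in\Gr(B).
\]
If $\ell\geq\langle\xi,u\rangle$, maximality yields $(u,-\xi)\in\Gr(B)$. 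Testing with $(v,v^*)=(u,-\xi)$ then gives $\ell\leq\langle\xi,u\rangle$. So $\limsup\langle\xi_\lambda,u_\lambda-u\rangle\leq0$ holds in every case.

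Pseudomonotonicity of $A$ then gives $\xi\in A(u)$ and $\ell=\langle\xi,u\rangle$. The display now reads $\langle-\xi-v^*,u-v\rangle\geq0$ for all $(v,v^*)\in\Gr(B)$, and maximality gives $-\xi\in B(u)$.

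The alternative you suggest at the end would not work. Testing only against $(u_0,B^0u_0)$ yields
\[
\ell\leq\langle\xi,u_0\rangle-\langle B^0u_0,u-u_0\rangle,
\]
which cannot be compared with $\langle\xi,u\rangle$. The hypothesis $u_0\in\dom(B)$ is used only for the $\lambda$-uniform bound on $u_\lambda$.

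One minor point: Brezis' surjectivity theorem is stated for operators coercive in the usual sense, $\langle w,u\rangle\geq c(\|u\|)\|u\|$. This does not follow from $u_0$-coercivity in general, so apply the theorem to the translate $v\mapsto (A+B_\lambda)(v+u_0)$.
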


The following result, contained in \cite[Lemma 2.111]{CLM}, concerns some basic properties of the $p$-Laplacian operator.

\begin{prop}
\label{plaplacian}
The operator $-\Delta_p: W^{1,p}_0(\Omega)\to W^{-1,p'}(\Omega)$ is bounded, continuous, and strictly monotone. In particular, it is pseudomonotone.
\end{prop}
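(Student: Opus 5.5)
We prove the four assertions of Proposition \ref{plaplacian} in turn, viewing $-\Delta_p$ as the operator $A:W^{1,p}_0(\Omega)\to W^{-1,p'}(\Omega)$ given by
$$ \langle A(u),v\rangle=\int_\Omega |\nabla u|^{p-2}\nabla u\nabla v\dx . $$

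\emph{Boundedness.} By Hölder's inequality, $|\langle A(u),v\rangle|\leq \|\nabla u\|_p^{p-1}\|\nabla v\|_p$. Hence $\|A(u)\|_{W^{-1,p'}}\leq\|\nabla u\|_p^{p-1}$, so $A$ maps bounded sets into bounded sets.

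\emph{Continuity.} Let $u_n\to u$ in $W^{1,p}_0(\Omega)$, so $\nabla u_n\to\nabla u$ in $L^p(\Omega;\R^N)$. The Nemytskii map $\xi\mapsto|\xi|^{p-2}\xi$ is continuous from $L^p(\Omega;\R^N)$ into $L^{p'}(\Omega;\R^N)$. To see this, pass to a subsequence that converges a.e. and is dominated by some $g\in L^p$. Then $||\nabla u_n|^{p-2}\nabla u_n|^{p'}\leq g^p$, and dominated convergence gives convergence in $L^{p'}$. The usual subsequence-of-subsequence argument extends this to the whole sequence. Applying Hölder once more yields $\|A(u_n)-A(u)\|_{W^{-1,p'}}\leq\||\nabla u_n|^{p-2}\nabla u_n-|\nabla u|^{p-2}\nabla u\|_{p'}\to0$.

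\emph{Strict monotonicity.} This rests on the pointwise inequality
$$ (|a|^{p-2}a-|b|^{p-2}b)\cdot(a-b)\geq(|a|^{p-1}-|b|^{p-1})(|a|-|b|)\geq 0 \quad\mbox{for all } a,b\in\R^N. $$
It follows by expanding the left-hand side and applying Cauchy--Schwarz to $a\cdot b$. Equality throughout forces $|a|=|b|$ together with $a\cdot b=|a||b|$, that is, $a=b$. Integrating, $\langle A(u)-A(v),u-v\rangle\geq0$. If this quantity vanishes, the nonnegative integrand is zero a.e., so $\nabla u=\nabla v$ a.e. By the Poincaré inequality (our choice of norm), $u=v$.

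\emph{Pseudomonotonicity.} Here we use the general fact that a bounded, hemicontinuous, monotone operator on a reflexive space is pseudomonotone. Continuity implies hemicontinuity. The proof goes as follows. Let $u_n\rightharpoonup u$ with $\limsup\langle A(u_n),u_n-u\rangle\leq0$. Monotonicity gives $\langle A(u_n),u_n-u\rangle\geq\langle A(u),u_n-u\rangle\to0$, so $\langle A(u_n),u_n-u\rangle\to0$. For $w=u+t(v-u)$ with $t\in(0,1)$, monotonicity gives $\langle A(u_n)-A(w),u_n-w\rangle\geq0$. Taking the liminf, we obtain $t\liminf\langle A(u_n),u-v\rangle\geq t\langle A(w),u-v\rangle$ in the limit, after using $\langle A(u_n),u_n-u\rangle\to0$ and $\langle A(w),u_n-w\rangle\to\langle A(w),u-w\rangle$. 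Divide by $t$ and let $t\to0^+$, using continuity of $A$. This gives $\liminf\langle A(u_n),u_n-v\rangle\geq\langle A(u),u-v\rangle$, which is pseudomonotonicity. This last step is the only point that needs some care, and it is standard; see \cite[Lemma 2.111]{CLM}.
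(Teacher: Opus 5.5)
Your proof is correct and reproduces the standard argument behind \cite[Lemma 2.111]{CLM}, which the paper simply cites without giving details; deducing pseudomonotonicity from monotonicity and continuity via the Minty-type argument is the usual route. (Boundedness of $A$ is not actually used in that last step, though listing it among the hypotheses does no harm.)
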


In order to handle the singularity of $f$, we will truncate it at the level of a small, regular subsolution $\underline{u}\in W^{1,p}_0(\Omega)$ to $-\Delta_p u = \underline{f}(u)$ in $\Omega$. More precisely, we state the following result, which is patterned after \cite[Lemma 3.1]{GM}.
\begin{prop}
\label{subsollemma}
Let \ref{subsol} be satisfied. Then there exists a positive $\underline{u}\in C^{1,\alpha}_0(\overline{\Omega})$, $\alpha\in(0,1)$, such that
\begin{equation}
\label{subsolprop}
\int_\Omega |\nabla \underline{u}|^{p-2}\nabla \underline{u}\nabla \varphi \dx \leq \int_\Omega \underline{f}(\underline{u})\varphi \dx \quad \mbox{for all non-negative} \ \varphi\in W^{1,p}_0(\Omega).
\end{equation}
In particular, there exists $\sigma>0$ such that
\begin{equation}
\label{hopf}
\underline{u}(x)\geq \sigma d(x) \quad \mbox{for all} \ x\in\Omega.
\end{equation}
\end{prop}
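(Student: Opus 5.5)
The plan is to take $\underline{u}:=\eps\,\varphi_1$, where $\varphi_1\in C^{1,\alpha}_0(\overline{\Omega})$ is the positive first eigenfunction of $(-\Delta_p,W^{1,p}_0(\Omega))$ normalized so that $\|\varphi_1\|_\infty=1$, and $\eps>0$ is small. Standard regularity (Lieberman) gives $\varphi_1\in C^{1,\alpha}(\overline{\Omega})$, and the strong maximum principle together with Hopf's lemma (V\'azquez) give $\varphi_1>0$ in $\Omega$ and $\partial\varphi_1/\partial\nu<0$ on $\partial\Omega$; here the $C^2$ regularity of $\partial\Omega$ is used. By compactness of $\overline{\Omega}$, these facts yield $\sigma_0>0$ with $\varphi_1\geq\sigma_0 d$ in $\Omega$. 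Estimate \eqref{hopf} then follows immediately with $\sigma:=\eps\sigma_0$.

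For \eqref{subsolprop}, the first step is to pass from $f$ to $\underline{f}$ in \ref{subsol}. Fix $\lambda>\lambda_1$ and $s_0>0$ with $f(t)\geq\lambda t^{p-1}$ for all $t\in(0,2s_0)$; this holds pointwise, so in particular a.e. For $s\in(0,s_0)$ and $\delta<s$ small, the essential infimum of $f$ over $(s-\delta,s+\delta)$ is at least $\lambda(s-\delta)^{p-1}$, since $t\mapsto t^{p-1}$ is increasing. Letting $\delta\to0^+$ gives
$$ \underline{f}(s)\geq\lambda s^{p-1}\quad\mbox{for all}\ s\in(0,s_0). $$

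Now choose $\eps<s_0$. Then $0<\underline{u}=\eps\varphi_1\leq\eps<s_0$ in $\Omega$. For any non-negative $\varphi\in W^{1,p}_0(\Omega)$, the eigenvalue equation gives
$$ \int_\Omega|\nabla\underline{u}|^{p-2}\nabla\underline{u}\nabla\varphi\dx=\lambda_1\int_\Omega\underline{u}^{p-1}\varphi\dx\leq\lambda\int_\Omega\underline{u}^{p-1}\varphi\dx\leq\int_\Omega\underline{f}(\underline{u})\varphi\dx. $$
The last integral is meaningful because $\underline{f}$ is lower semicontinuous, hence Borel, so $\underline{f}(\underline{u})$ is measurable and non-negative.

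The only delicate point is the passage from the liminf condition on $f$ to a lower bound on the regularized function $\underline{f}$. It works because the bound on $f$ holds on an entire neighborhood of $0$, so the essential infimum inherits it. No further obstacle is expected, since the construction reduces to the eigenfunction.
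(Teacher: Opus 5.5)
Your proof is correct and follows the same route as the paper: a small multiple of the first eigenfunction, the lower bound on $f$ near $0$ passed to $\underline{f}$ through the essential infimum, and the Hopf boundary lemma for $\underline{u}\geq\sigma d$ (the paper just cites this last step). The margin $\lambda>\lambda_1$, the interval $(0,2s_0)$, and the measurability of $\underline{f}$ only make explicit details the paper leaves implicit.
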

\begin{proof}
Owing to \ref{subsol}, there exists $\delta>0$ such that
\begin{equation*}
f(s) > \lambda_1 s^{p-1} \quad \mbox{for all} \ s\in(0,\delta).
\end{equation*}
In particular, by \eqref{subsupf} we get
\begin{equation}
\label{lowerfbound}
\underline{f}(s) \geq \lambda_1 s^{p-1} \quad \mbox{for all} \ s\in(0,\delta).
\end{equation}
Now take any $\underline{u}\in C^{1,\alpha}_0(\overline{\Omega})$ first eigenfunction of $(-\Delta_p,W^{1,p}_0(\Omega))$ such that $\|\underline{u}\|_\infty<\delta$. Then \eqref{lowerfbound} implies 
$$ -\Delta_p \underline{u} = \lambda_1 \underline{u}^{p-1} \leq \underline{f}(\underline{u}) \quad \mbox{in} \ \Omega, $$
so that \eqref{subsolprop} is ensured. Finally, \eqref{hopf} is an immediate consequence of \cite[Proposition 2.3]{GM}.
\end{proof}

According to Proposition \ref{subsollemma}, we can introduce the truncated reaction term $g:\Omega\times\R\to[0,+\infty)$ defined as
\begin{equation}
\label{gdef}
g(x,s) := f(\max\{\underline{u}(x),s\}) \quad \mbox{for all} \ (x,s)\in\Omega\times\R.
\end{equation}
Then we consider the function $G:\Omega\times\R\to\R$ defined as
\begin{equation}
\label{Gdef}
G(x,s) := \int_0^s g(x,t) \dt \quad \mbox{for all} \ (x,s)\in\Omega\times\R,
\end{equation}
and the associated functional $\G:W^{1,p}_0(\Omega)\to\R$, i.e.,
\begin{equation}
\label{Gcaldef}
\G(u) := \int_\Omega G(x,u(x)) \dx \quad \mbox{for all} \ u\in W^{1,p}_0(\Omega).
\end{equation}

Due to the regularity of $\underline{u}$ and the growth of $f$ (see Remark \ref{ggrowth} below), the functionals $G$ and $\G$ introduced in \eqref{Gdef} and \eqref{Gcaldef}, respectively, fulfill the properties summarized in the next proposition, which aims to generalize the Aubin-Clarke theorem in the singular context. Before stating the result, we introduce the following notation:
\begin{equation*}
\begin{aligned}
g_x(s):=g(x,s) \quad &\mbox{and} \quad G_x(s):=G(x,s) \quad &&\mbox{for all} \ (x,s)\in\Omega\times\R, \\
\underline{g_x}(s):=\lim_{\delta\to 0^+}\essinf_{|t-s|<\delta} g_x(t)\quad&\mbox{and}\quad
\overline{g_x}(s):=\lim_{\delta\to 0^+}\esssup_{|t-s|<\delta} g_x(t)\quad &&\mbox{for all} \ (x,s)\in\Omega\times\R.
\end{aligned}
\end{equation*}

\begin{prop}
\label{generalgrad}
Let $g:\Omega\times\R\to\R$ be a Borel-measurable function such that
\begin{equation}
\label{aubinclarkecond}
|g(x,s)|\leq c_1 |s|^{p-1} + c_2 d(x)^{-\gamma} \quad \mbox{for all} \ (x,s)\in\Omega\times\R,
\end{equation}
being $0<\gamma<1<p<+\infty$. Then the functions $G$ and $\G$ introduced in \eqref{Gdef} and \eqref{Gcaldef}, respectively, are well defined. In addition, $G_x$ is locally Lipschitz continuous and
\begin{equation}
\label{Gder}
\partial G_x(s) = [\underline{g_x}(s),\overline{g_x}(s)] \quad \mbox{for all} \ (x,s)\in\Omega\times\R,
\end{equation}
as well as $\G$ is locally Lipschitz continuous and
\begin{equation}
\label{Gcalder}
\partial \G(u) \subseteq \{v\in W^{-1,p'}(\Omega)\cap L^{q'}(\Omega): \, v(x)\in \partial G_x(u(x)) \ \mbox{for a.a.} \ x\in\Omega\}
\end{equation}
for all $u\in W^{1,p}_0(\Omega)$ and $q>\max\left\{p,\frac{1}{1-\gamma}\right\}$.\\
Moreover, $\partial\G:W^{1,p}_0(\Omega)\to 2^{W^{-1,p'}(\Omega)}$ is a bounded pseudomonotone operator.
\end{prop}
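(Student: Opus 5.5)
The plan is to reduce everything to one integrability fact and then repeat the classical Aubin–Clarke argument. The fact is that $d^{-\gamma}\in L^{q'}(\Omega)$ whenever $\gamma q'<1$. Since $\Omega$ is a bounded $C^2$ domain, $\int_\Omega d^{-s}\dx<\infty$ for every $s<1$. Moreover, $q>\frac{1}{1-\gamma}$ is equivalent to $q'<\frac1\gamma$. Finally, $q>p$ gives $q<p^*$ (after shrinking $q$ if needed in the statement's range), so that $W^{1,p}_0(\Omega)\hookrightarrow L^q(\Omega)$ compactly. It follows that the map $x\mapsto c_1|u|^{p-1}+c_2 d^{-\gamma}$ belongs to $L^{q'}$ for $u\in L^q$, because $|u|^{p-1}\in L^{q/(p-1)}\subseteq L^{q'}$ once $q\ge p$. (Alternatively, by Hardy's inequality, $\int d^{-\gamma}|v|\le \|d^{1-\gamma}\|_\infty\|v/d\|_1\le C\|\nabla v\|_p$, so $d^{-\gamma}\in W^{-1,p'}$.)

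\emph{Step 1 (pointwise).} By \eqref{aubinclarkecond}, $g_x$ is locally bounded, so $G_x$ is well defined and locally Lipschitz, with $|G(x,s)|\le c_1|s|^p/p+c_2d^{-\gamma}|s|$. Hence $\G$ is finite on $W^{1,p}_0$. For \eqref{Gder}, I would use the one-dimensional characterization of Clarke gradients: $G_x$ is absolutely continuous with derivative $g_x$ a.e. Then $G_x^0(s;1)=\limsup_{\tau\to s,\,t\to0^+}\frac1t\int_\tau^{\tau+t}g_x=\overline{g_x}(s)$, and similarly $G_x^0(s;-1)=-\underline{g_x}(s)$. This yields the interval $[\underline{g_x}(s),\overline{g_x}(s)]$. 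The identification of the limsup of averages with the essential limsup is the standard Chang argument.

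\emph{Step 2 (functional).} Local Lipschitz continuity of $\G$ follows from $|G(x,a)-G(x,b)|\le (c_1(|a|+|b|)^{p-1}+c_2d^{-\gamma})|a-b|$ together with Hölder's inequality in $L^q\times L^{q'}$ and the embedding. For \eqref{Gcalder}, I consider $\G$ on $L^q(\Omega)$ and apply Fatou's lemma (reverse form) to the difference quotients. Domination is given by the $L^1$ function $(c_1(|u|+|w|+|v|)^{p-1}+c_2d^{-\gamma})|v|$ along sequences $w_n\to u$ in $L^q$ with a.e. convergence and an $L^q$ majorant. This gives $\G^0(u;v)\le\int_\Omega G_x^0(u(x);v(x))\dx$. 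If $\xi\in\partial\G(u)$, this inequality extends by density from $W^{1,p}_0$ to $L^q$, since the right side is continuous on $L^q$, so $\xi$ is representable by some $v\in L^{q'}$. Testing with $\chi_E$ times $\pm$ signs, together with the formula $G_x^0(s;\pm1)$ from Step 1, gives $v(x)\in\partial G_x(u(x))$ a.e.

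\emph{Step 3 (pseudomonotonicity).} This is the main obstacle. Boundedness is immediate: every element of $\partial\G(u)$ satisfies $|v|\le c_1|u|^{p-1}+c_2d^{-\gamma}$, with bounded $L^{q'}$ norm. Values are nonempty, closed and convex by Clarke theory. Now take $u_n\rightharpoonup u$ in $W^{1,p}_0$ and $u_n^*\rightharpoonup u^*$ with $u_n^*\in\partial\G(u_n)$. Compactness gives $u_n\to u$ in $L^q$, and $u_n^*$ is bounded in $L^{q'}$, so $u^*\in L^{q'}$ and $u_n^*\rightharpoonup u^*$ weakly in $L^{q'}$. Therefore $\langle u_n^*,u_n\rangle\to\langle u^*,u\rangle$. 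To show $u^*\in\partial\G(u)$, I would use upper semicontinuity of $\G^0$ on $L^q$: $\langle u^*,v\rangle=\lim\langle u_n^*,v\rangle\le\limsup\G^0(u_n;v)\le\G^0(u;v)$, where $\G$ is regarded as a locally Lipschitz functional on $L^q$ and then restricted to $W^{1,p}_0$. The restriction only decreases $\G^0$ along the embedding direction, which is still enough here: the inequality holds for all $v\in W^{1,p}_0$ with $\G^0$ computed in $L^q$, and the $L^q$-generalized gradient restricts into the $W^{1,p}_0$ one via the adjoint of the embedding. The delicate point is exactly this passage between the two Clarke gradients, and I would check it carefully.
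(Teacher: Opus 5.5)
\textbf{Gap: the exponent $q$.} The proof breaks at the very first step. You cannot in general choose $q$ with $q>\max\{p,\frac{1}{1-\gamma}\}$ and $q<p^*$. When $p<N$ this requires $\frac{1}{1-\gamma}<p^*=\frac{Np}{N-p}$, which fails for instance for $N=2$, $p=\frac32$ (so $p^*=6$) and $\gamma=\frac{9}{10}$ (so $\frac{1}{1-\gamma}=10$). The proposition allows every $\gamma\in(0,1)$ and every $p\in(1,\infty)$. The lower bound cannot be relaxed either, since $d^{-\gamma}\in L^{q'}(\Omega)$ exactly when $\gamma q'<1$.

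In this regime $W^{1,p}_0(\Omega)\not\subseteq L^q(\Omega)$, so everything that treats $\mathcal{G}$ as a functional on $L^q(\Omega)$ loses its link to $W^{1,p}_0(\Omega)$:
\begin{itemize}
\item in Step 2, the comparison $\mathcal{G}^0_{W^{1,p}_0}(u;v)\le\mathcal{G}^0_{L^q}(u;v)$ and the density extension of the resulting inequality;
\item the $W^{-1,p'}$-boundedness, which you deduce from an $L^{q'}$-bound;
\item in Step 3, the limit $\langle u_n^*,u_n\rangle\to\langle u^*,u\rangle$ (strong $L^q$ against weak $L^{q'}$), the upper semicontinuity of $\mathcal{G}^0$ along $u_n\to u$ in $L^q$, and the transfer between the two Clarke gradients.
\end{itemize}
All of these rest on the (compact) embedding $W^{1,p}_0(\Omega)\hookrightarrow L^q(\Omega)$. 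Your argument is therefore only valid for $p\ge N$ or $\gamma<1-\frac{1}{p^*}$.

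\textbf{How the paper avoids it.} The Fatou estimate $\mathcal{G}^0(u;z)\le\int_\Omega G_x^0(u(x);z(x))\dx$ is proved directly in $W^{1,p}_0(\Omega)$. The domination uses an $L^p$ majorant of the approximating sequence together with Hardy's inequality for the term $d^{-\gamma}|z|$.

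The representation by a function is then obtained on the sum space $W^{1,p}_0(\Omega)+L^q(\Omega)$. Into this space $W^{1,p}_0(\Omega)$ always embeds densely, whatever $q$ is, and its dual is $W^{-1,p'}(\Omega)\cap L^{q'}(\Omega)$. The paper then uses the equality of Clarke gradients under dense embeddings.

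For pseudomonotonicity, the singular term is controlled by the fact $d^{-\gamma}\in W^{-1,p'}(\Omega)$, which you mention only parenthetically. One has $\int_\Omega d^{-\gamma}|u_n-u|\dx\to0$ because $|u_n-u|\rightharpoonup0$ in $W^{1,p}_0(\Omega)$. The $|u_n|^{p-1}$ part only needs compactness in $L^p(\Omega)$.

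\textbf{A repair that keeps your scheme.} Replace $L^q(\Omega)$ by $L^p(\Omega)\cap L^1(\Omega;d^{-\gamma}\dx)$. $\mathcal{G}$ is locally Lipschitz on this space, and $W^{1,p}_0(\Omega)$ embeds into it densely and compactly. For compactness, split $\Omega$ into $\{d<\varepsilon\}$, where Hardy's inequality gives a factor $\varepsilon^{1-\gamma}$, and $\{d\ge\varepsilon\}$, where $L^1$-compactness applies.
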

\begin{proof}
According to \eqref{aubinclarkecond}, the function $g_x$ belongs to $L^\infty_\loc(\R)$, so that $G$ is well defined. Then the Hardy-Sobolev inequality (see, e.g., \cite[Proposition 2.2]{GM}), jointly with \eqref{aubinclarkecond}, ensures that $\G$ is well defined on $W^{1,p}_0(\Omega)$. Indeed, for every $u \in W^{1,p}_0(\Omega)$, one has
\begin{equation}
\label{Gcalwelldef}
\begin{aligned}
\left|\int_\Omega G(x,u(x)) \dx\right| &\leq \int_\Omega \left(\int_0^{|u(x)|}|g(x,t)|\dt\right) \dx \\ &\leq \int_\Omega \left(\int_0^{|u(x)|}(c_1 t^{p-1} + c_2 d(x)^{-\gamma})\dt\right) \dx \\
&\leq \int_\Omega \left(\frac{c_1}{p}|u(x)|^p + c_2d(x)^{-\gamma}|u(x)|\right) \dx \\ &\leq C(\|u\|_p^p+\|u\|_{1,p})<+\infty,
\end{aligned}
\end{equation}
for a suitable $C=C(N,p,\Omega,c_1,c_2)>0$.

The local Lipschitz continuity of $G_x$ and the representation formula \eqref{Gder} are immediate consequences of \cite[Example 1]{LM}, applied to $g_x\in L^\infty_\loc(\R)$ for each fixed $x\in\Omega$.

Let us prove that $\G$ is Lipschitz continuous on bounded sets, so that it is locally Lipschitz continuous. Preliminarily, observe that \eqref{aubinclarkecond} entails
\begin{equation}
\label{convexineq}
|g(x,t)| \leq c_1(|a|^{p-1}+|b|^{p-1})+c_2d(x)^{-\gamma} \quad \mbox{for all} \ (x,t)\in \Omega\times[a,b],
\end{equation}
whatever $a,b\in\R$, $a<b$. Now fix an arbitrary $M>0$ and take any $u,v\in W^{1,p}_0(\Omega)$ such that $\max\{\|u\|_{1,p},\|v\|_{1,p}\}\leq M$. According to \eqref{convexineq}, as well as H\"older's, Hardy-Sobolev's, and Poincaré's inequalities, we deduce
\begin{equation}
\label{lebourg}
\begin{aligned}
|\G(u)-\G(v)| &\leq \int_\Omega |G(x,u(x))-G(x,v(x))| \dx \leq \int_\Omega \left|\int_{v(x)}^{u(x)} |g(x,t)| \dt\right| \dx \\
&\leq \int_\Omega (c_1|u(x)|^{p-1}+c_1|v(x)|^{p-1}+c_2d(x)^{-\gamma})|u(x)-v(x)| \dx \\
&\leq c_1(\|u\|_p^{p-1}+\|v\|_p^{p-1})\|u-v\|_p+c_2\|d^{-\gamma}\|_{-1,p'}\|u-v\|_{1,p} \\
&\leq C'_M\|u-v\|_{1,p},
\end{aligned}
\end{equation}
for an opportune $C'_M=C'_M(N,p,\Omega,c_1,c_2,M)>0$.

Now the inclusion \eqref{Gcalder} follows by adapting the Aubin-Clarke theorem \cite[Theorem 2.181]{CLM} to the singular setting in the following way. Fix any $u\in W^{1,p}_0(\Omega)$ and let $\{w_n\}\subseteq W^{1,p}_0(\Omega)$ be such that $w_n\to u$ in $W^{1,p}_0(\Omega)$. Then, according to \cite[Theorem 4.9]{Br}, there exists a non-negative $W\in L^p(\Omega)$ such that $|w_n|\leq W$ a.e.\,in $\Omega$. Thus, a computation similar to \eqref{lebourg} reveals that, given any $z\in W^{1,p}_0(\Omega)$ and $\{t_n\}\subseteq (0,1)$ such that $t_n\to 0^+$, there exists $C''=C''(p,c_1,c_2)>0$ such that, for all $x\in\Omega$ and $n\in\N$,
\begin{equation}
\label{domconv}
\frac{|G_x(w_n(x)+t_n z(x))-G_x(w_n(x))|}{t_n} \leq C''((W(x))^{p-1}+|z(x)|^{p-1}+d(x)^{-\gamma})|z(x)|.
\end{equation}
The right-hand side of \eqref{domconv} is independent of $n\in\N$ and, through H\"older's and Hardy-Sobolev's inequalities, it is summable in $\Omega$. In particular, the function
$$ x \mapsto G_x^0(u(x);z(x)) $$
belongs to $L^1(\Omega)$ for all $u,z\in W^{1,p}_0(\Omega)$. Accordingly, fixed any $u,z\in W^{1,p}_0(\Omega)$, the Fatou lemma ensures
\begin{equation}
\label{fatou}
\begin{aligned}
\G^0(u;z) &= \limsup_{\substack{w\to u \\ t\to 0^+}} \frac{\G(w+tz)-\G(w)}{t} \\ &= \limsup_{\substack{w\to u \\ t\to 0^+}} \int_\Omega \frac{G_x(w(x)+tz(x))-G_x(w(x))}{t} \dx \\
&\leq \int_\Omega \left[\limsup_{\substack{w\to u \\ t\to 0^+}} \frac{G_x(w(x)+tz(x))-G_x(w(x))}{t}\right] \dx \\ &= \int_\Omega G_x^0(u(x);z(x)) \dx.
\end{aligned}
\end{equation}
Let us consider $\Psi:W^{1,p}_0(\Omega)\to\R$ defined as
$$ \Psi(z) := \int_\Omega G_x^0(u(x);z(x)) \dx \quad \mbox{for all} \ z\in W^{1,p}_0(\Omega). $$
Take any $\xi\in \partial \G(u)$. Then the definition of Clarke's generalized gradient and \eqref{fatou} yield
$$ \langle \xi,z \rangle \leq \G^0(u;z) \leq \Psi(z) \quad \mbox{for all} \ z\in W^{1,p}_0(\Omega). $$
Since $\Psi(0)=0$ and $\Psi$ is a continuous, convex functional, then $\xi\in D\Psi(0)=\partial\Psi(0)$.

To conclude the proof of \eqref{Gcalder}, it remains to show that, given any $q>\max\left\{p,\frac{1}{1-\gamma}\right\}$, we have
\begin{equation}
\label{Psider}
\partial\Psi(0)\subseteq\{v\in W^{-1,p'}(\Omega)\cap L^{q'}(\Omega): \, v(x)\in \partial G_x(u(x)) \ \mbox{for a.a.} \ x\in\Omega\}.
\end{equation}
To this end, let us consider the Banach space $(X,\|\cdot\|_X)$, where $X:=W^{1,p}_0(\Omega)+L^q(\Omega)$ is the sum space of $W^{1,p}_0(\Omega)$ and $L^q(\Omega)$ endowed with the norm
$$ \|z\|_X:=\inf\left\{\|z'\|_{1,p}+\|z''\|_q: \ z=z'+z'', \ z'\in W^{1,p}_0(\Omega), \ z''\in L^q(\Omega)\right\} \quad \mbox{for all} \ z\in X. $$
We define
$$ \psi:\Omega\times\R\to\R, \quad \psi(x,s)=G_x^0(u(x);s) \quad \mbox{for all} \ (x,s)\in\Omega\times\R, $$
as well as $\psi_x:=\psi(x,\cdot)$ for all $x\in\Omega$ and
$$ \tilde{\Psi}:X\to\R, \quad \tilde{\Psi}(z) := \int_\Omega \psi(x,z(x)) \dx \quad \mbox{for all} \ z\in X. $$
Owing to \cite[Proposition 2.1]{GM} and the choice of $q$, we have $d^{-\gamma}\in L^{q'}(\Omega)$. Thus, $|z_1|^{p-1}+|z_2|^{p-1}+d^{-\gamma}\in X^*$ for all $z_1,z_2\in X$. A computation similar to \eqref{Gcalwelldef} and \eqref{lebourg} then ensures that $\tilde{\Psi}$ is well defined and locally Lipschitz continuous on $X$. Now take any $z\in X$ and $\xi\in \partial\tilde{\Psi}(z)$. Since $X^*\hookrightarrow L^{q'}(\Omega)$, then there exists $v\in L^{q'}(\Omega)$ such that
\begin{equation}
\label{represent}
\langle \xi,h \rangle = \int_\Omega v(x)h(x) \dx \quad \mbox{for all} \ h\in X.
\end{equation}
Moreover, due to the definition of $\partial\tilde{\Psi}$ and the Fatou lemma (arguing as in \eqref{fatou}), we get
\begin{equation}
\label{tildepsigrad}
\langle \xi,h \rangle \leq \tilde{\Psi}^0(z;h) \leq \int_\Omega \psi_x^0(z(x);h(x)) \dx \quad \mbox{for all} \ h\in X.
\end{equation}
Taking into account that $\psi(x,\cdot)$ is positively homogeneous, by \eqref{represent}--\eqref{tildepsigrad} we infer
$$ \int_\Omega \left[\psi^0_x(z(x);1)-v(x)\right]h(x)\dx \geq 0 \quad \mbox{for all non-negative} \ h\in X. $$
In particular we get $v(x) \leq \psi_x^0(z(x);1)$ for a.a.\,$x\in\Omega$, forcing
$$ v(x) \in \partial\psi_x(z(x)) \quad \mbox{for a.a.} \ x\in\Omega. $$
Since $W^{1,p}_0(\Omega)$ is densely embedded in $X$, then \cite[Corollary 1.2]{MR} ensures $\partial \Psi(z)=\partial \tilde{\Psi}(z)$ for all $z\in W^{1,p}_0(\Omega)$, in particular for $z=0$. From the convexity of $\psi(x,\cdot)$ and \cite[Remark 2.170]{CLM}, we deduce $\partial\psi_x(0)=D\psi_x(0)=\partial G_x(u(x))$ for all \,$x\in\Omega$. Hence, \eqref{Psider} holds true. Indeed, recalling that $X^*\hookrightarrow W^{-1,p'}(\Omega)$, one has
\begin{equation*}
\begin{aligned}
\partial\Psi(0)=\partial\tilde{\Psi}(0)&\subseteq X^*\cap \left\{v\in L^{q'}(\Omega): \ v(x)\in\partial\psi_x(0) \ \mbox{for a.a.} \ x\in\Omega\right\} \\
&= \left\{v\in W^{-1,p'}(\Omega) \cap L^{q'}(\Omega): \ v(x)\in\partial G_x(u(x)) \ \mbox{for a.a.} \ x\in\Omega\right\}.
\end{aligned}
\end{equation*}

Finally, let us consider the operator $\partial\G:W^{1,p}_0(\Omega)\to 2^{W^{-1,p'}(\Omega)}$. Since $\G$ is Lipschitz continuous on bounded sets, then $\partial\G$ is a bounded operator. Indeed, given any bounded open set $U\subseteq W^{1,p}_0(\Omega)$ and $u\in U$, we have
$$ \|\xi\|_{-1,p'} = \max_{\|h\|_{1,p}\leq 1} |\langle \xi,h \rangle| \leq \sup_{\|h\|_{1,p}\leq 1} \G^0(u;h) = \sup_{\|h\|_{1,p}\leq 1} \limsup_{\substack{w\to u \\ t\to 0^+}} \frac{\G(w+th)-\G(w)}{t} \leq L_U(\G) $$
for all $\xi\in\partial\G(u)$, being $L_U(\G)$ the Lipschitz constant of $\G_{\mid_U}$. The local Lipschitz continuity of $\G$ ensures that $\partial\G(u)$ is non-empty, closed, and convex for all $u\in W^{1,p}_0(\Omega)$ (see \cite[pp.65-66]{CLM}). In order to prove that $\partial\G$ is pseudomonotone, take any $\{u_n\}\subseteq W^{1,p}_0(\Omega)$, $u\in W^{1,p}_0(\Omega)$, $\{\xi_n\}\subseteq W^{-1,p'}(\Omega)$, and $\xi\in W^{-1,p'}(\Omega)$ such that $u_n\rightharpoonup u$ in $W^{1,p}_0(\Omega)$, $\xi_n\rightharpoonup \xi$ in $W^{-1,p'}(\Omega)$, and $\xi_n\in \partial\G(u_n)$ for all $n\in\N$. We want to prove that $\langle \xi_n,u_n \rangle \to \langle \xi,u \rangle$ and $\xi\in\partial\G(u)$. According to \eqref{Gcalder}, for each $n\in\N$ there exists $v_n\in L^1(\Omega)$ representing $\xi_n$, so that
\begin{equation}
\label{pseudomonotone}
\begin{aligned}
\langle \xi_n,u_n \rangle = \langle \xi_n,u_n-u \rangle + \langle \xi_n,u \rangle = \int_\Omega v_n(x)(u_n(x)-u(x)) \dx + \langle \xi_n,u \rangle \quad \mbox{for all} \ n\in\N.
\end{aligned}
\end{equation}
Let us analyze the right-hand side of \eqref{pseudomonotone}. Owing to the compactness of the embedding $W^{1,p}_0(\Omega)\hookrightarrow L^p(\Omega)$, we can assume that $u_n\to u$ in $L^p(\Omega)$. Since $v_n(x)\in \partial G_x^0(u_n(x))$ for a.a.\,$x\in\Omega$, by \eqref{Gder} and \eqref{aubinclarkecond} we infer
\begin{equation*}
\begin{aligned}
\left|\int_\Omega v_n(x)(u_n(x)-u(x)) \dx\right| &\leq \int_\Omega \left[c_1|u_n(x)|^{p-1}+c_2d(x)^{-\gamma}\right]|u_n(x)-u(x)| \dx \\
&\leq c_1 \|u_n\|_p^{p-1}\|u_n-u\|_p + \int_\Omega d(x)^{-\gamma}|u_n(x)-u(x)| \dx.
\end{aligned}
\end{equation*}
From $u_n\rightharpoonup u$ in $W^{1,p}_0(\Omega)$ and $u_n\to u$ in $L^p(\Omega)$ we deduce $|u_n-u|\rightharpoonup 0$ (cf. \cite[Lemma 2.1]{BG}) and $\|u_n\|_p^{p-1}\|u_n-u\|_p\to 0$, respectively. Thus,
\begin{equation}
\label{pseudomonotone1}
\lim_{n\to\infty} \int_\Omega v_n(x)(u_n(x)-u(x)) \dx = 0.
\end{equation}
Exploiting the fact that $\xi_n\rightharpoonup \xi$ in $W^{-1,p'}(\Omega)$, we get
\begin{equation}
\label{pseudomonotone2}
\lim_{n\to\infty} \langle \xi_n,u \rangle = \langle \xi,u \rangle.
\end{equation}
Putting \eqref{pseudomonotone1}--\eqref{pseudomonotone2} into \eqref{pseudomonotone} entails
$$ \lim_{n\to\infty} \langle \xi_n,u_n \rangle = \langle \xi,u \rangle. $$

The proof of the proposition is concluded once we prove that $\xi\in\partial\G(u)$. Passing to a subsequence if necessary, we can suppose that $u_n\to u$ in $L^p(\Omega)$ and a.e.\,in $\Omega$. Now take any $h\in W^{1,p}_0(\Omega)$. Then, since $\xi_n\rightharpoonup \xi$ in $W^{-1,p'}(\Omega)$ and $\xi_n\in\partial\G(u_n)$ for all $n\in\N$,
\begin{equation}
\label{closedgraph1}
\begin{aligned}
\langle \xi,h \rangle &= \lim_{n\to\infty} \langle \xi_n,h \rangle \leq \limsup_{n\to\infty} \G^0(u_n;h) \\
&= \limsup_{n\to\infty}\limsup_{\substack{z\to 0 \\ t\to 0^+}} \int_\Omega \left[\frac{1}{t}\int_{u_n(x)+z(x)}^{u_n(x)+z(x)+th(x)}g(x,\tau)\dtau\right] \dx.
\end{aligned}
\end{equation}
Thus, there exist $\{z_m\}\subseteq W^{1,p}_0(\Omega)$ and $\{t_m\}\subseteq\R^+$ such that $z_m\to 0$ in $W^{1,p}_0(\Omega)$ and a.e.\,in $\Omega$, $t_m\to 0^+$, and
\begin{equation}
\label{closedgraph2}
\begin{aligned}
&\limsup_{n\to\infty}\limsup_{\substack{z\to 0 \\ t\to 0^+}} \int_\Omega \left[\frac{1}{t}\int_{u_n(x)+z(x)}^{u_n(x)+z(x)+th(x)}g(x,\tau)\dtau\right] \dx \\
&= \lim_{n\to\infty}\lim_{m\to\infty} \int_\Omega \left[\frac{1}{t_m}\int_{u_n(x)+z_m(x)}^{u_n(x)+z_m(x)+t_m h(x)}g(x,\tau)\dtau\right] \dx.
\end{aligned}
\end{equation}
Let us consider the double sequence $\{a_{n,m}\}_{n,m}$, $n,m\in\N$, defined by
$$ a_{n,m} := \int_\Omega \left[\frac{1}{t_m}\int_{u_n(x)+z_m(x)}^{u_n(x)+z_m(x)+t_m h(x)}g(x,\tau)\dtau\right] \dx \quad \mbox{for all} \ n,m\in\N. $$
Reasoning as in \eqref{domconv}, the dominated convergence theorem ensures that, for a.a.\,$x\in\Omega$,
$$ \lim_{n\to\infty} \int_{u_n(x)+z_m(x)}^{u_n(x)+z_m(x)+t_m h(x)}g(x,\tau)\dtau = \int_{u(x)+z_m(x)}^{u(x)+z_m(x)+t_mh(x)} g(x,\tau) \dtau \quad \mbox{uniformly in} \ m\in\N. $$
Therefore we infer
$$ \lim_{n\to\infty} a_{n,m} = \int_\Omega \left[\frac{1}{t_m}\int_{u(x)+z_m(x)}^{u(x)+z_m(x)+t_m h(x)}g(x,\tau)\dtau\right] \dx \quad \mbox{uniformly in} \ m\in\N, $$
which allows to interchange the limits in the right-hand side of \eqref{closedgraph2}. Accordingly, \eqref{closedgraph1}--\eqref{closedgraph2} produce
\begin{equation}
\label{closedgraph3}
\begin{aligned}
\langle \xi,h \rangle &\leq \lim_{n\to\infty}\lim_{m\to\infty} \int_\Omega \left[\frac{1}{t_m}\int_{u_n(x)+z_m(x)}^{u_n(x)+z_m(x)+t_m h(x)}g(x,\tau)\dtau\right] \dx \\
&= \lim_{m\to\infty}\lim_{n\to\infty} \int_\Omega \left[\frac{1}{t_m}\int_{u_n(x)+z_m(x)}^{u_n(x)+z_m(x)+t_m h(x)}g(x,\tau)\dtau\right] \dx \\
&= \lim_{m\to\infty} \int_\Omega \left[\frac{1}{t_m}\int_{u(x)+z_m(x)}^{u(x)+z_m(x)+t_m h(x)}g(x,\tau)\dtau\right] \dx \\
&\leq \limsup_{\substack{z\to 0\\ t\to 0^+}} \int_\Omega \left[\frac{1}{t} \int_{u(x)+z(x)}^{u(x)+z(x)+t h(x)}g(x,\tau)\dtau \right] \dx = \G^0(u;h).
\end{aligned}
\end{equation}
Since \eqref{closedgraph3} holds for all $h\in W^{1,p}_0(\Omega)$, then $\xi\in\partial\G(u)$.
\end{proof}

\begin{rmk}
\label{ggrowth}
The function $g$ defined in \eqref{gdef} fulfills \eqref{aubinclarkecond}. Indeed, since Proposition \ref{subsollemma} ensures $\sigma d\leq \underline{u}\leq \delta$ in $\Omega$ for some $\sigma,\delta>0$, then \ref{singular} and \ref{sublinear} imply
\begin{equation*}
\begin{aligned}
0\leq g(x,s) &= f(\max\{\underline{u}(x),s\}) \leq c_1 |s|^{p-1} + c_2 \underline{u}(x)^{-\gamma} + c_3 \\
&\leq C_1 |s|^{p-1} + (C_2\sigma^{-\gamma} + C_3\diam(\Omega)^\gamma)d(x)^{-\gamma} \quad \mbox{for all} \ (x,s)\in\Omega\times\R,
\end{aligned}
\end{equation*}
being $C_1\in(0,\lambda_1)$ and $C_2,C_3>0$ opportune. A similar computation ensures the more general estimate
\begin{equation}
\label{overfgrowth}
0\leq \overline{g_x}(s) \leq \overline{f}(\max\{\underline{u}(x),s\}) \leq c_1 |s|^{p-1} + c_2 d(x)^{-\gamma} \quad \mbox{for all} \ (x,s)\in\Omega\times\R,
\end{equation}
where $c_1\in(0,\lambda_1)$ and $c_2>0$ are suitable constants.
\end{rmk}

\begin{prop}
\label{subdiff}
Let $(X,\|\cdot\|)$ be a Banach space and $K\subseteq X$ be a non-empty, closed, convex set. Denote by $\K$ the indicator function of $K$. Then $\K$ is a proper, convex, lower semi-continuous functional on $X$, $\dom(D\K)=\dom(\K)=K$, and $D\K:X\to 2^{X^*}$ is a maximal monotone operator.
\end{prop}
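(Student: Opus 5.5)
The proof splits into two parts: first the elementary properties of $\K$, and then the identification of $\dom(D\K)$ together with maximal monotonicity, the latter obtained from the classical Rockafellar theorem.

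\emph{Convexity, properness, lower semicontinuity.} $\K$ is proper because $K\neq\emptyset$ gives a point where $\K=0$, and $\K$ never takes the value $-\infty$. For convexity, take $u,v\in X$ and $t\in[0,1]$. If either $u$ or $v$ lies outside $K$, the right-hand side of
$$ \K(tu+(1-t)v)\leq t\K(u)+(1-t)\K(v) $$
equals $+\infty$ and there is nothing to prove. Otherwise $tu+(1-t)v\in K$ by convexity of $K$, so both sides vanish. Lower semicontinuity holds because every sublevel set $\{\K\leq c\}$ is either $\emptyset$ (if $c<0$) or $K$ (if $c\geq 0$), and both are closed.

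\emph{Domains.} By definition $\dom(\K)=K$. If $u\notin K$, the convention stated in Section 2 gives $D\K(u)=\emptyset$, so $\dom(D\K)\subseteq K$. Conversely, for $u\in K$ we have $0\in D\K(u)$: the inequality $\K(v)-\K(u)=\K(v)\geq 0=\langle 0,v-u\rangle$ holds for every $v$. Hence $\dom(D\K)=K$. More precisely, for $u\in K$ the set $D\K(u)$ is the normal cone $\{\eta:\langle\eta,v-u\rangle\leq 0 \ \mbox{for all} \ v\in K\}$.

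\emph{Monotonicity.} Let $\eta\in D\K(u)$ and $\zeta\in D\K(v)$. Adding the two subgradient inequalities
$$ \K(v)-\K(u)\geq\langle\eta,v-u\rangle \quad\mbox{and}\quad \K(u)-\K(v)\geq\langle\zeta,u-v\rangle $$
gives $\langle\eta-\zeta,u-v\rangle\geq 0$. This manipulation is legitimate because $u,v\in K$, so both values of $\K$ are finite.

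\emph{Maximality.} This is the one step that is not a direct check, and I do not plan to prove it by hand. I invoke Rockafellar's theorem: the subdifferential of a proper, convex, lower semicontinuous functional on a Banach space is maximal monotone (see, e.g., \cite{CLM}, in the section on convex analysis). In the application the space is the reflexive space $W^{1,p}_0(\Omega)$, so the reflexive version of the theorem suffices.

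For completeness, in the reflexive case a self-contained alternative would argue via the Minty–Browder criterion. It suffices to show that $D\K+J$ is surjective, where $J$ is a duality map, after renorming so that $X$ and $X^*$ are strictly convex. Given $\eta^*\in X^*$, the functional $u\mapsto \K(u)+\frac12\|u\|^2-\langle\eta^*,u\rangle$ is coercive, convex, and lower semicontinuous, so it attains a minimizer $u$. At the minimizer, the subdifferential sum rule gives $\eta^*\in D\K(u)+J(u)$. The sum rule is valid here because the norm term is continuous everywhere.
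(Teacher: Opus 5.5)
Your proof is correct and takes essentially the paper's approach. The paper's proof is only a citation of Barbu's monograph: Chapter II, Example 3 for the indicator function, and Theorem 2.1, which is exactly Rockafellar's theorem that the subdifferential of a proper, convex, lower semicontinuous functional on a Banach space is maximal monotone; you verify the elementary properties and $\dom(D\K)=K$ by hand and invoke the same theorem for maximality, with one cosmetic point: in your convexity check the right-hand side equals $+\infty$ only for $t\in(0,1)$, while the endpoint cases $t\in\{0,1\}$ are trivial.
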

\begin{proof}
See \cite[Chapter II, Example 3 and Theorem 2.1]{B}.
\end{proof}

In the sequel we will also need the following refined version of the strong locality property for the $p$-Laplacian (see \cite[Proposition 2.4]{GM}).


\begin{prop}
\label{stronglocal}
Let $D\subseteq\R$ be a set having null Lebesgue measure. Suppose that, for some $v\in L^2_\loc(\Omega)$, the function $u\in W^{1,p}_\loc(\Omega)$ is a distributional solution to $\Delta_p u = v$ in $\Omega$. Then $|\nabla u|^{p-2}\nabla u\in W^{1,2}_\loc(\Omega)$ and
$$ \Delta_p u(x) = 0 \quad \mbox{for a.e.} \ x\in u^{-1}(D). $$
\end{prop}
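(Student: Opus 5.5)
The plan is to reduce the statement to two facts of different nature: a second-order regularity result for the stress field $a(\nabla u):=|\nabla u|^{p-2}\nabla u$, and a Stampacchia-type property of Sobolev functions, namely that weak derivatives vanish a.e. on level sets. Once $a(\nabla u)\in W^{1,2}_\loc(\Omega)$ is known, $\Delta_p u=\operatorname{div} a(\nabla u)$ is an honest $L^2_\loc$ function, computed pointwise a.e. from the weak partial derivatives of the components of $a(\nabla u)$. It then suffices to show that all these derivatives vanish a.e. on $u^{-1}(D)$.

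First step: I would obtain $a(\nabla u)\in W^{1,2}_\loc(\Omega)$ by invoking the local second-order estimates of Cianchi and Maz'ya for the $p$-Laplacian. For $u\in W^{1,p}_\loc(\Omega)$ solving $\Delta_p u=v$ in the distributional sense with $v\in L^2_\loc(\Omega)$, they give
\[
\|a(\nabla u)\|_{W^{1,2}(B_r)}\leq C\left(\|v\|_{L^2(B_{2r})}+\|a(\nabla u)\|_{L^1(B_{2r})}\right)
\]
for balls $B_{2r}\Subset\Omega$. The estimate holds for arbitrary $p\in(1,\infty)$, with no convexity assumption, since it is local. I expect this to be the main obstacle if one wanted to argue from scratch, because it relies on a regularization-and-approximation argument with a delicate pointwise identity for $\operatorname{div} a(\nabla u)$. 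Here I would simply quote it, and then note that $\operatorname{div} a(\nabla u)=v$ a.e. in $\Omega$, since the distributional divergence now coincides with the a.e. one.

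Second step: show that $\nabla u=0$ a.e. on $u^{-1}(D)$. I would use the classical fact that, for $u\in W^{1,1}_\loc$ and any Lebesgue-null set $D\subseteq\R$, one has $\nabla u=0$ a.e. on $u^{-1}(D)$. This follows, for instance, from the coarea formula:
\[
\int_{u^{-1}(D)}|\nabla u|\dx=\int_D \mathcal{H}^{N-1}(u^{-1}(t))\,{\rm d}t=0.
\]
Alternatively, one can apply the chain rule to $\Theta\circ u$ with $\Theta(t)=\int_0^t\mathbbm{1}_{\R\setminus D}$. Either way, $a(\nabla u)=0$ a.e. on $E:=u^{-1}(D)$.

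Third step: apply Stampacchia's theorem componentwise to $w_i:=a_i(\nabla u)\in W^{1,2}_\loc(\Omega)$. It gives $\nabla w_i=0$ a.e. on $\{w_i=0\}$, a set which contains $E$ up to a null set. Hence $\partial_i w_i=0$ a.e. on $E$ for every $i$. Summing over $i$ yields $\Delta_p u(x)=\operatorname{div} a(\nabla u)(x)=0$ for a.e. $x\in u^{-1}(D)$, which concludes the argument.
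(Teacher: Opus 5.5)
Your argument is correct and is the standard proof of this statement, which the paper does not prove itself but quotes from \cite[Proposition 2.4]{GM}: you combine the Cianchi--Maz'ya local $W^{1,2}$ bound for $|\nabla u|^{p-2}\nabla u$ (the same \cite{CM} the paper invokes in Theorem \ref{locregthm}) with the vanishing of weak gradients a.e.\ on preimages of null sets, used first for $u$ and then componentwise for the stress field. Drop your alternative justification via $\Theta(t)=\int_0^t\mathbbm{1}_{\R\setminus D}$: since $|D|=0$ this $\Theta$ is the identity, so the chain rule yields the claim only in a form valid for every Borel representative of $\Theta'$, which is exactly the fact being proved; the coarea argument is fine.
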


We conclude this section by proving two results which link together differential equations and variational inequalities. To this purpose, we first establish a generalization of \cite[Theorem 6.9]{KS}, and then we infer a Lewy-Stampacchia type result, based on \cite[Theorem 2.4]{GM}.
\begin{prop}
\label{kinderstamp}
Let $\eta\in W^{-1,p'}(\Omega)$, $\phi,K$ be as in \eqref{Kdef}, and $u\in K$ be a solution to the variational inequality
\begin{equation}
\label{varineq}
\int_\Omega |\nabla u|^{p-2}\nabla u \nabla(v-u) \dx \geq \langle \eta,v-u\rangle \quad \mbox{for all} \ v\in K.
\end{equation}
Then there exists a non-negative $\mu\in W^{-1,p'}(\Omega)$ such that
\begin{equation}
\label{eqmeasure}
-\Delta_p u = \eta+\mu \quad \mbox{in} \ \Omega
\end{equation}
and
\begin{equation}
\label{support}
\supp(\mu) \subseteq [u=\phi].
\end{equation}
\end{prop}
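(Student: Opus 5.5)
The natural definition is to let $\mu$ be the defect in the equation,
$$ \mu := -\Delta_p u - \eta \in W^{-1,p'}(\Omega), $$
so that \eqref{eqmeasure} holds by construction. This is legitimate because $-\Delta_p u\in W^{-1,p'}(\Omega)$ by Proposition \ref{plaplacian} and $\eta\in W^{-1,p'}(\Omega)$ by assumption. What remains is to prove two things: that $\mu$ is non-negative, and that its support lies in the closed set $[u=\phi]$.

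\emph{Non-negativity.} Take any non-negative $\varphi\in C^\infty_c(\Omega)$ and set $v:=u+\varphi$. Then $v\in W^{1,p}_0(\Omega)$ and $v\geq u\geq\phi$, so $v\in K$. Inserting $v$ into \eqref{varineq} gives
$$ \langle -\Delta_p u,\varphi\rangle \geq \langle \eta,\varphi\rangle, \quad\mbox{that is,}\quad \langle\mu,\varphi\rangle\geq 0. $$
Hence $\mu$ is a non-negative distribution. Since $\mu$ also lies in $W^{-1,p'}(\Omega)$, density gives $\langle\mu,\varphi\rangle\geq0$ for every non-negative $\varphi\in W^{1,p}_0(\Omega)$. (The cone of non-negative test functions is dense in the non-negative cone of $W^{1,p}_0(\Omega)$, for instance by mollifying positive parts.)

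\emph{Support.} It suffices to show $\langle\mu,\varphi\rangle=0$ for every $\varphi\in C^\infty_c(\Omega)$ whose support lies in the open set $[u>\phi]$. By a partition of unity, together with the fact that the support of $\varphi$ is compact, we may assume $\supp\varphi\subseteq B_\rho(x_0)$, where $x_0\in[u>\phi]$ and $\rho,h$ are as in Definition \ref{contactdef}. Thus $h\in C^\infty_c(B_\rho(x_0))$ is non-negative, $h(x_0)>0$, and $u-\phi\geq h$ in $B_\rho(x_0)$.

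Shrinking $\rho$, we can assume $h\geq c>0$ on $\overline{B_\rho(x_0)}$ (restricted to a smaller ball). Then for $|t|\leq c/\|\varphi\|_\infty$ we have
$$ u+t\varphi\geq u-|t||\varphi|\geq \phi+h-c\geq\phi \quad\mbox{on } B_\rho(x_0), $$
while $u+t\varphi=u\geq\phi$ outside the ball. So $v=u\pm t\varphi\in K$. Testing \eqref{varineq} with both signs yields $\pm t\langle\mu,\varphi\rangle\geq0$, whence $\langle\mu,\varphi\rangle=0$.

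The main obstacle is exactly this localization: passing from the pointwise-in-$W^{1,p}$ notion $x_0\in[u>\phi]$ to a uniform margin $u-\phi\geq c$ on a neighbourhood, and then covering a general compact support by finitely many such balls. A general $\varphi$ is split as $\sum_i\psi_i\varphi$ with $\{\psi_i\}$ a smooth partition of unity subordinate to the cover. I would treat the margin carefully via the strict positivity of $h$ near $x_0$; this is why the definition requires $h(x_0)>0$ with $h$ continuous.

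Finally, since $\mu$ annihilates $C^\infty_c([u>\phi])$, its distributional support is contained in $\Omega\setminus[u>\phi]$. The obstacle condition $u\geq\phi$ prevents points of $[u<\phi]$ from carrying any mass relevant here. One can check that $[u<\phi]=\emptyset$: otherwise $\phi-u\geq h>0$ on a ball, contradicting $u\geq\phi$ a.e. Therefore $\Omega\setminus[u>\phi]=[u=\phi]$, which gives \eqref{support}.
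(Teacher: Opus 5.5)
Your proof is correct and follows essentially the same route as the paper: set $\mu:=-\Delta_p u-\eta$, obtain $\mu\geq 0$ by testing with $u+\varphi$ for non-negative $\varphi$, show $\langle\mu,\zeta\rangle=0$ for $\zeta$ supported in small balls around points of $[u>\phi]$ via two-sided perturbations $u\pm t\zeta\in K$ (using the positivity margin of $h$), patch with a partition of unity, and conclude with $[u<\phi]=\emptyset$. Two minor remarks, neither affecting correctness: the density step is unnecessary, since $u+\varpi\in K$ for every non-negative $\varpi\in W^{1,p}_0(\Omega)$ and you can test with it directly as the paper does; and the radii should be shrunk before choosing the finite cover.
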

\begin{proof}
The proof easily follows from the argument developed in \cite[pp.43-44]{KS}, so here we only sketch it.

Given any non-negative $\varpi\in W^{1,p}_0(\Omega)$ and $\eps>0$, we can choose $v:=u+\eps\varpi\in K$ in \eqref{varineq}, so that
$$ \langle -\Delta_p u - \eta,\varpi \rangle \geq 0 \quad \mbox{for all non-negative} \ \varpi\in W^{1,p}_0(\Omega). $$
Thus, setting $\mu:=-\Delta_p u-\eta\in W^{-1,p'}(\Omega)$ produces \eqref{eqmeasure}.

If $[u>\phi]=\emptyset$, then $[u=\phi]=\Omega$ (since $u\in K$ forces $[u<\phi]=\emptyset$) and \eqref{support} is trivial. Otherwise, let $x_0\in[u>\phi]$. Then there exist $\rho>0$ and a non-negative $h\in C^\infty_c(B_{2\rho}(x_0))$ such that $h>0$ in $\overline{B}_\rho(x_0)$ and
$$ u\geq \phi+h \quad \mbox{in} \ B_{2\rho}(x_0). $$
Thus, for any $\zeta\in C^\infty_c(B_\rho(x_0))$ we can find $\eps>0$ so small that
$$ u+\eps\zeta\geq \phi+\frac{1}{2}h \quad \mbox{in} \ B_\rho(x_0). $$
Hence, $u+\eps\zeta\in K$. Choosing $v:=u+\eps\zeta$ in \eqref{varineq} and dividing by $\eps$ yield
$$ \int_{B_\rho(x_0)} |\nabla u|^{p-2}\nabla u \nabla\zeta \dx \geq \langle \eta,\zeta \rangle. $$
Replacing $\zeta$ with $-\zeta$ gives
$$ \int_{B_\rho(x_0)} |\nabla u|^{p-2}\nabla u \nabla\zeta \dx = \langle \eta,\zeta \rangle \quad \mbox{for all} \ \zeta\in C^\infty_c(B_\rho(x_0)). $$
Standard partition of unity and density arguments produce
\begin{equation*}
-\Delta_p u = \eta \quad \mbox{in} \ [u>\phi].
\end{equation*}
Taking into account \eqref{eqmeasure} and $u\in K$ (whence $[u<\phi]=\emptyset$) yields \eqref{support}.
\end{proof}

\begin{prop}
\label{lewystampacchia}
Let $\eta\in W^{-1,p'}(\Omega)$, $\phi,K$ be as in \eqref{Kdef}, and $u\in K$ be a solution to the variational inequality \eqref{varineq}. Then
\begin{equation}
\label{lewy}
\eta \leq -\Delta_p u \leq \max\{-\Delta_p \phi_+, \eta\} \quad \mbox{in} \ W^{-1,p'}(\Omega),
\end{equation}
where the maximum of the distributions $-\Delta_p \phi_+$ and $\eta$ is given by the Riesz-Kantorovich formula
$$ \max\{-\Delta_p \phi_+, \eta\}(v) := \sup_{\substack{0\leq z\leq v\\ z\in W^{1,p}_0(\Omega)}} \left(\langle-\Delta_p \phi_+,z\rangle + \langle\eta,v-z\rangle\right) \quad \mbox{for all non-negative} \ v\in W^{1,p}_0(\Omega). $$
\end{prop}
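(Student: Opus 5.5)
The lower bound $\eta\leq-\Delta_p u$ follows from Proposition \ref{kinderstamp}: $-\Delta_p u=\eta+\mu$ with $\mu\geq 0$. For the upper bound we use a penalization/comparison argument in the spirit of the Lewy-Stampacchia inequality, as in \cite[Theorem 2.4]{GM}. Set $\theta:=\max\{-\Delta_p\phi_+,\eta\}$, defined by the Riesz-Kantorovich formula. By construction $\theta\geq\eta$ and $\theta\geq-\Delta_p\phi_+$. We must also check that $\theta$ is a (sublinear, then linear after the usual lattice argument) element of $W^{-1,p'}(\Omega)$. This holds because $\theta-\eta=(-\Delta_p\phi_+-\eta)_+$ is the positive part of an element of the dual lattice, and for non-negative $v$ one has $|\theta(v)|\leq (\|{-\Delta_p\phi_+}\|_{-1,p'}+\|\eta\|_{-1,p'})\|v\|_{1,p}$ after splitting $v=z+(v-z)$.

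The plan is to show $\langle -\Delta_p u-\theta,\varpi\rangle\leq 0$ for all non-negative $\varpi\in W^{1,p}_0(\Omega)$. Since $u$ solves \eqref{varineq}, the first step is a comparison principle. Let $w\in W^{1,p}_0(\Omega)$ be the unique solution of $-\Delta_p w=\theta$, which exists by Proposition \ref{plaplacian} and Minty-Browder. We will show that $\min\{u,w\}$, or rather a suitable competitor, gives the bound.

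Concretely, I would follow the classical route. Test \eqref{varineq} with $v:=u-\eps(u-\max\{\phi_+,\,\cdot\})$-type competitors. The cleanest version is the following. For $\varpi\geq 0$, choose $z$ realizing (approximately) the supremum in $\theta(\varpi)$, split $\varpi=z+(\varpi-z)$, and use the locality of $\mu$ from Proposition \ref{kinderstamp}. On $[u>\phi]$ we have $\mu=0$, so $-\Delta_p u=\eta\leq\theta$ there. On the contact set $u=\phi=\phi_+$ (since $u\geq0$ is not assumed, we work with $\phi_+$ where $\phi>0$). There, a Kato-type inequality for $\max$ and the comparison $u\geq\phi_+$ with equality give $-\Delta_p u\leq-\Delta_p\phi_+$ in the sense of measures. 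Equivalently, and avoiding fine properties of measures, I would argue via penalization. Solve
\[ -\Delta_p u_\eps=\eta+\left(\theta-\eta\right)\Theta_\eps(u_\eps-\phi_+),\]
where $\Theta_\eps$ is a smooth non-increasing cutoff equal to $1$ on $(-\infty,0]$ and $0$ on $[\eps,\infty)$. By monotonicity we get $u_\eps\geq\phi_+$, via a test with $(\phi_+-u_\eps)_+$ using $-\Delta_p\phi_+\leq\theta$. By construction $\eta\leq-\Delta_p u_\eps\leq\theta$. Uniform bounds then give $u_\eps\to\tilde u$ strongly in $W^{1,p}_0(\Omega)$, with $\tilde u$ solving \eqref{varineq}.

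Finally, uniqueness for \eqref{varineq} follows from the strict monotonicity of $-\Delta_p$ (Proposition \ref{plaplacian}), since $\eta$ is fixed. This gives $\tilde u=u$, and the order bounds pass to the weak limit because $-\Delta_p u_\eps\to-\Delta_p u$ in $W^{-1,p'}(\Omega)$. The main obstacle is the step $u_\eps\geq\phi_+$ together with the strong convergence. The first needs the comparison between $-\Delta_p\phi_+$ and $\theta$ on the set $\{u_\eps<\phi_+\}$, where $\Theta_\eps=1$. The second needs the $(S_+)$ property of $-\Delta_p$ applied to $\langle-\Delta_p u_\eps,u_\eps-u\rangle\to0$, which I would obtain from the penalization term having a sign on $\{u_\eps\geq\phi_++\eps\}$. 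A further point is that $\phi_+\in K$ by \ref{Hphi}, and $K$ should be replaced by $\{v\geq\phi_+\}$ only where this is legitimate. We note $\{v\geq\phi\}\supseteq\{v\geq\phi_+\}$, so one should instead penalize with $\phi$ and compare with $\phi_+$ only on $\{\phi>0\}$. I expect this adjustment to be the delicate technical point.
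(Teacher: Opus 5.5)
Your lower bound is the paper's argument. The upper bound, however, fails at the point you postponed, and the failure is not technical. Your penalized solutions satisfy $u_\eps\ge\phi_+$, so the limit $\tilde u$ solves the variational inequality on $K_+:=\{v\in W^{1,p}_0(\Omega):\ v\ge\phi_+\}$. The given $u$ solves it on the larger set $K$. Strict monotonicity gives uniqueness for each problem separately, not $\tilde u=u$.

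No localization to $\{\phi>0\}$ can repair this, because without a sign condition on $u$ the upper bound in \eqref{lewy} is false. Take $p=2$ and $\chi\in C^\infty_c(\Omega)$ with $\chi\ge0$, $\chi\not\equiv0$. Set $\phi:=u:=-\chi$ (so $\phi_+=0$), $M:=\max_{\overline{\Omega}}\Delta\chi>0$, and $\eta:=\Delta\chi-M\le0$. For every $v\in K$,
$$\int_\Omega\nabla u\cdot\nabla(v-u)\dx-\int_\Omega\eta(v-u)\dx=M\int_\Omega(v-u)\dx\ge0,$$
so $u$ solves \eqref{varineq}. Yet $\eta\le0$ gives $\max\{-\Delta\phi_+,\eta\}(v)=0$ for all $v\ge0$. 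Then \eqref{lewy} would force $\Delta\chi\le0$, which is impossible since $\int_\Omega\Delta\chi\dx=0$ and $\Delta\chi\not\equiv0$.

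The missing ingredient is $u\ge0$, hence $u\ge\phi_+$. This holds, for instance, when $\eta\ge0$: testing \eqref{varineq} with $v=u^+\in K$ gives $-\|\nabla u^-\|_p^p\ge\langle\eta,u^-\rangle\ge0$. Then $u\in K_+\subseteq K$ solves the inequality on $K_+$, your uniqueness step does yield $\tilde u=u$, and $\phi_+\in W^{1,p}_0(\Omega)$ is a legitimate obstacle. This is exactly the setting where the paper uses the result (Theorem~\ref{globregthm}, with $\eta\ge0$ and $u\ge\underline u>0$). The paper's own proof quotes the abstract Lewy--Stampacchia theorem of Gigli--Mosconi for $E(v)=\frac1p\|\nabla v\|_p^p-\langle\eta,v\rangle$ with obstacle $\phi$. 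It passes over the same $\phi$ versus $\phi_+$ discrepancy.

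A second gap is your claim that $\theta\in W^{-1,p'}(\Omega)$. $W^{-1,p'}(\Omega)$ is not a vector lattice, and your splitting estimate is wrong: $0\le z\le v$ does not bound $\|z\|_{1,p}$ by $\|v\|_{1,p}$, since $z$ can oscillate below $v$. In fact $\theta$ may equal $+\infty$. If it were finite, $\theta-\eta$ and $\theta+\Delta_p\phi_+$ would be positive distributions, hence Radon measures, so $-\Delta_p\phi_+-\eta$ would be a signed measure. That fails, e.g., for $\phi_+=0$ and $\eta=\partial_{x_1}F$ with $F\in L^\infty(\Omega)$ such that $\partial_{x_1}F$ is not a measure. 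On such $v$ the inequality \eqref{lewy} is vacuous, but your penalized equation has no meaning.

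So the penalization needs two further ingredients:
\begin{itemize}
\item $\theta\in W^{-1,p'}(\Omega)$ as an extra assumption. This holds in Theorem~\ref{globregthm}, where $0\le\theta-\eta\le\Phi_+$.
\item Care with quasi-continuous representatives, to give meaning to $\Theta_\eps(u_\eps-\phi_+)(\theta-\eta)$ and to the continuity needed to solve the penalized problem.
\end{itemize}
With these and the sign condition $u\ge0$, your route is a valid, self-contained alternative. The paper's route delegates everything to an abstract theorem and needs no approximation; yours makes the comparison mechanism explicit, at the cost of the finiteness assumption on $\theta$.
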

\begin{proof}
The first inequality in \eqref{lewy} stems directly from Proposition \ref{kinderstamp}, since the distribution $\mu$ in \eqref{eqmeasure} is non-negative. On the other hand, the second inequality in \eqref{lewy} is a consequence of \cite[Theorem 2.4]{GigMos}, applied to the convex functional $E(v):=\frac{1}{p}\|\nabla v\|_p^p-\langle\eta,v\rangle$, $v\in W^{1,p}_0(\Omega)$, and the obstacle $\varphi:=\phi$, taking into account that \eqref{varineq} implies that $u$ minimizes $E$ over $K$, owing to \cite[Theorem 2.3]{GigMos}.
\end{proof}

\section{Proof of the main result}
\label{S:mainres}

Let us consider the following auxiliary problem:
\begin{equation}
\label{prob2}
\tag{Q}
\int_\Omega |\nabla u|^{p-2}\nabla u \nabla(v-u) \dx \geq \langle \eta,v-u\rangle \quad \mbox{for all} \ v\in K,
\end{equation}
where $K$ is defined in \eqref{Kdef} and $\eta\in\partial\G(u)$, being $\G$ defined in \eqref{Gcaldef}.
The problem \eqref{prob2} can be restated as a variational-hemivariational inequality. Therefore, we look for solutions $u\in K$ to
\begin{equation}
\label{varhemivar}
\int_\Omega |\nabla u|^{p-2}\nabla u\nabla(v-u) \dx + \K(v)-\K(u) \geq \langle \eta,v-u\rangle \quad \mbox{for all} \ v\in W^{1,p}_0(\Omega),
\end{equation}
where $\K$ is defined in \eqref{Kcaldef}. We can prove the following existence result for \eqref{varhemivar}.

\begin{thm}
\label{auxexistence}
Let \ref{localbound}--\ref{sublinear} and \ref{Hphi} be satisfied. Then \eqref{varhemivar} admits a solution $u\in K$ fulfilling $u\geq \underline{u}$ in $\Omega$. In particular, $u$ solves \eqref{prob}.
\end{thm}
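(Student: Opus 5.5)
We will solve \eqref{varhemivar} by applying Theorem \ref{surjective} in $X:=W^{1,p}_0(\Omega)$ with
\[
A:=-\Delta_p-\partial\G \qquad \mbox{and} \qquad B:=D\K,
\]
and target $u^*=0$. By Proposition \ref{plaplacian} and Proposition \ref{generalgrad}, which applies thanks to Remark \ref{ggrowth}, $A$ is bounded and pseudomonotone. Indeed, the sum of a single-valued bounded pseudomonotone operator and a bounded multivalued pseudomonotone one is again pseudomonotone; here we use that $-\partial\G$ behaves like $\partial\G$ in the proof of Proposition \ref{generalgrad}, since that argument only used the compactness estimate \eqref{pseudomonotone1} and the closed graph property, both invariant under the sign change. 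The set $K$ is non-empty (it contains $\phi_+$ by \ref{Hphi}), closed and convex, so Proposition \ref{subdiff} makes $B$ maximal monotone with $\dom(B)=K$. We take $u_0:=\phi_+\in K$.

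\textbf{Coercivity.} The key estimate uses \eqref{overfgrowth} with $c_1<\lambda_1$. For $\xi\in\partial\G(u)$ represented by $v$ with $0\le v\le c_1|u|^{p-1}+c_2d^{-\gamma}$, we get
\[
\langle -\Delta_p u-\xi,u-u_0\rangle \geq \|\nabla u\|_p^p-\|\nabla u\|_p^{p-1}\|\nabla u_0\|_p-\int_\Omega v(|u|+|u_0|)\dx .
\]
The last integral is bounded by
\[
\frac{c_1}{\lambda_1}\|\nabla u\|_p^p+C\|\nabla u\|_p^{p-1}+C\|\nabla u\|_p+C,
\]
using Poincaré's inequality and $\lambda_1\|w\|_p^p\le\|\nabla w\|_p^p$, Hölder's inequality, and Hardy--Sobolev for the $d^{-\gamma}$ terms. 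Hence
\[
\langle -\Delta_p u-\xi,u-u_0\rangle \geq \Bigl(1-\frac{c_1}{\lambda_1}\Bigr)\|\nabla u\|_p^p-\mbox{lower order terms},
\]
which gives $u_0$-coercivity. Theorem \ref{surjective} then yields $u$ and $\eta\in\partial\G(u)$ with $-\Delta_p u-\eta\in -D\K(u)$. In particular $u\in K$, and by the definition of the subdifferential this is exactly \eqref{varhemivar}.

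\textbf{Comparison $u\ge\underline{u}$.} This is the delicate step. Since $\underline{u}\le\delta$ and $\underline u\in W^{1,p}_0$, the function $v:=\max\{u,\underline u\}=u+(\underline u-u)_+$ belongs to $K$, because $v\ge u\ge\phi$. Testing \eqref{varhemivar} with it gives
\[
\int_\Omega|\nabla u|^{p-2}\nabla u\nabla(\underline u-u)_+\dx\ge\int_\Omega \eta\,(\underline u-u)_+\dx .
\]
On the set $\{u<\underline u\}$ we have $g(x,s)=f(\underline u(x))$ for $s$ near $u(x)$, because $\max\{\underline u(x),s\}=\underline u(x)$ there. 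Thus $\partial G_x(u(x))\subseteq[\underline f(\underline u(x)),\overline f(\underline u(x))]$, and in particular $\eta\ge \underline f(\underline u)$ a.e.\ on that set. We then use \eqref{subsolprop} with $\varphi=(\underline u-u)_+$ and subtract, obtaining
\[
\int_{\{u<\underline u\}}\bigl(|\nabla \underline u|^{p-2}\nabla\underline u-|\nabla u|^{p-2}\nabla u\bigr)\nabla(\underline u-u)\dx\le 0 .
\]
Strict monotonicity forces $(\underline u-u)_+=0$. The main care needed here is the a.e.\ identification of $\eta$ on $\{u<\underline u\}$, particularly on the set where $u$ lies at the boundary of that region. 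Since $\varphi$ vanishes off the strict set, only the strict inequality region matters, and \eqref{Gcalder} handles it.

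\textbf{Conclusion.} Finally, since $u\ge\underline u$, we have $g(x,\cdot)=f$ near $u(x)$ whenever $u(x)>\underline u(x)$. Where $u(x)=\underline u(x)$, only $\underline{g_x}(u(x))\ge\underline f(u(x))$ and $\overline{g_x}(u(x))\le\overline f(u(x))$ are needed, and these follow since $\max\{\underline u(x),\cdot\}$ is nondecreasing. Therefore $\underline f(u)\le\eta\le\overline f(u)$, with $\eta\in W^{-1,p'}(\Omega)\cap L^{q'}(\Omega)\subseteq L^1_\loc(\Omega)$. Moreover $u\ge\underline u>0$, so $u$ solves \eqref{prob}.
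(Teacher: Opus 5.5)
Your proposal is correct and follows the paper's proof essentially step for step: Theorem \ref{surjective} with $A=-\Delta_p-\partial\G$ and $B=D\K$, $u_0$-coercivity from $c_1<\lambda_1$ in \eqref{overfgrowth}, and $u\ge\underline u$ by testing with $u+(\underline u-u)_+$ against \eqref{subsolprop} and using strict monotonicity. Your remark that the pseudomonotonicity argument of Proposition \ref{generalgrad} is sign-symmetric, and hence covers $-\partial\G$, is a detail the paper leaves implicit. One point that you and the paper both skip is that $g_x(s)=f(\underline u(x))$ for all $s\le\underline u(x)$, so your inclusions on $\{u\le\underline u\}$ need $\underline f(\underline u(x))\le f(\underline u(x))\le\overline f(\underline u(x))$; this is not automatic pointwise but holds for a.e.\ $x$, because $\{s>0:\ f(s)\notin[\underline f(s),\overline f(s)]\}$ is Lebesgue-null and, by Proposition \ref{stronglocal} applied to $-\Delta_p\underline u=\lambda_1\underline u^{p-1}>0$, the preimage under $\underline u$ of a null set is null.
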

\begin{proof}
This proof is inspired by \cite[Theorem 7.9]{CLM}.

Preliminarily, notice that \ref{Hphi} implies $\phi_+\in K$, so that $K\neq\emptyset$. Now we want to apply Theorem \ref{surjective} with $X:=W^{1,p}_0(\Omega)$, $A:=-\Delta_p-\partial\G$, $B:=D\K$, and an arbitrarily fixed $u_0\in K=\dom(D\K)$ (see Proposition \ref{subdiff}). Propositions \ref{plaplacian} and \ref{generalgrad} ensure that $-\Delta_p$ and $\partial\G$ are bounded and pseudomonotone operators, so $A=-\Delta_p-\partial\G$ enjoys the same properties. In addition, Proposition \ref{subdiff} guarantees the maximal monotonicity of the operator $B=D\K$.

It remains to prove that $A$ is $u_0$-coercive. To this aim, given any $v\in W^{1,p}_0(\Omega)\setminus\{0\}$ and $w\in\partial\G(v)$, \eqref{Gder}--\eqref{Gcalder} along with the H\"older, Hardy-Sobolev, and Poincaré inequalities entail
\begin{equation*}
\begin{aligned}
&\langle -\Delta_p v-w,v-u_0 \rangle \\
& = \langle -\Delta_p v,v\rangle - \langle -\Delta_p v,u_0 \rangle - \langle w,v-u_0\rangle \\
&\geq \|\nabla v\|_p^p - \|\nabla v\|_p^{p-1}\|\nabla u_0\|_p - \int_\Omega \overline{g_x}(v(x))|v(x)-u_0(x)| \dx \\
&\geq \|\nabla v\|_p^p - \|\nabla v\|_p^{p-1}\|\nabla u_0\|_p - \int_\Omega (c_1 |v(x)|^{p-1} + c_2 d(x)^{-\gamma})(|v(x)|+|u_0(x)|) \dx \\
&\geq \|\nabla v\|_p^p - \|\nabla v\|_p^{p-1}\|\nabla u_0\|_p - c_1 (\|v\|_p^p+\|v\|_p^{p-1}\|u_0\|_p) - c_2 \|d^{-\gamma}\|_{-1,p'}(\|v\|_{1,p}+\|u_0\|_{1,p}) \\
&\geq \left(1-\frac{c_1}{\lambda_1}\right)(\|\nabla v\|_p^p - \|\nabla u_0\|_p\|\nabla v\|_p^{p-1}) - c_2 \|d^{-\gamma}\|_{-1,p'}(\|\nabla v\|_p +\|\nabla u_0\|_p),
\end{aligned}
\end{equation*}
where $c_1\in(0,\lambda_1)$ and $c_2>0$ stem from Remark \ref{ggrowth}. Hence we have
$$ \langle -\Delta_p v-w,v-u_0 \rangle \geq \left[ \left(1-\frac{c_1}{\lambda_1}\right)\|\nabla v\|_p^{p-1}-C_{u_0}\left(\|\nabla v\|_p^{p-2}+\|\nabla v\|_p^{-1}+1\right) \right] \|\nabla v\|_p, $$
for a suitable $C_{u_0}>0$. Thus, choosing $c(r):=\left(1-\frac{c_1}{\lambda_1}\right)r^{p-1}-C_{u_0}\left(r^{p-2}+r^{-1}+1\right)$ in the definition of $u_0$-coercivity, it turns out that $A$ is $u_0$-coercive.

According to Theorem \ref{surjective}, we conclude that there exist $u\in W^{1,p}_0(\Omega)$, $\xi\in D\K(u)$, and $\eta\in\partial\G(u)$ such that
$$ -\Delta_p u + \xi - \eta = 0 \quad \mbox{in} \ W^{-1,p'}(\Omega). $$
By the definition of $D\K(u)$ we get
\begin{equation}
\label{convindicator}
\K(v)-\K(u) \geq \langle \xi,v-u \rangle \quad \mbox{for all} \ v\in W^{1,p}_0(\Omega), 
\end{equation}
whence \eqref{varhemivar}. Moreover, taking any $v\in K$ (so that $\K(v)=0$), \eqref{convindicator} yields
$$ \K(u) \leq \K(v) - \langle \xi,v-u\rangle \leq \|\xi\|_{-1,p'}\|v-u\|_{1,p} < +\infty, $$
which implies $u\in K$.

Finally, we prove that $u\geq\underline{u}$ in $\Omega$. According to \eqref{subsolprop}, we have
\begin{equation}
\label{subsoltest}
\int_\Omega |\nabla\underline{u}|^{p-2}\nabla\underline{u}\nabla(\underline{u}-u)_+ \dx \leq \int_\Omega \underline{f}(\underline{u})(\underline{u}-u)_+ \dx.
\end{equation}
Moreover, since $u\in K$, from \eqref{varhemivar} we deduce
\begin{equation}
\label{varhemivar2}
\int_\Omega |\nabla u|^{p-2}\nabla u\nabla(v-u) \dx \geq \int_\Omega \underline{g_x}(u)(v-u) \dx \quad \mbox{for all} \ v\in W^{1,p}_0(\Omega), \ v\geq u,
\end{equation}
according to Proposition \ref{generalgrad} (notice that $\eta(x)\in\partial G_x(u(x))=[\underline{g_x}(u(x)),\overline{g_x}(u(x))]$ for a.a.\,in $\Omega$). Thus, choosing $v:=u+(\underline{u}-u)_+$ in \eqref{varhemivar2}, we get
\begin{equation}
\label{compprinc}
\int_\Omega |\nabla u|^{p-2}\nabla u\nabla(\underline{u}-u)_+ \dx \geq \int_\Omega \underline{g_x}(u)(\underline{u}-u)_+ \dx.
\end{equation}
Subtracting \eqref{compprinc} from \eqref{subsoltest} we obtain
\begin{equation*}
\int_{\{\underline{u}\geq u\}} (|\nabla\underline{u}|^{p-2}\nabla\underline{u}-|\nabla u|^{p-2}\nabla u) \nabla(\underline{u}-u) \dx \leq \int_{\{\underline{u}\geq u\}} (\underline{f}(\underline{u})-\underline{g_x}(u))(\underline{u}-u) \dx.
\end{equation*}
Observe that, by the definition of $g$,
$$ \underline{g_x}(u) \geq \underline{f}(\max\{\underline{u},u\})=\underline{f}(\underline{u})  \quad \mbox{in} \ \{\underline{u}\geq u\}, $$
which entails
\begin{equation*}
\int_{\{\underline{u}\geq u\}} (|\nabla\underline{u}|^{p-2}\nabla\underline{u}-|\nabla u|^{p-2}\nabla u) \nabla(\underline{u}-u) \dx \leq 0.
\end{equation*}
Then the strict monotonicity of the $p$-Laplacian (see \cite[Chapter 12, (I) and (VII)]{Li}) forces $\{\underline{u}\geq u\}$ to be of null Lebesgue measure, so that $u\geq \underline{u}$ in $\Omega$. Hence, noticing that Proposition \ref{generalgrad}, along with \eqref{gdef} and \eqref{subsupf}, implies
$$ \eta(x)\in\partial G_x(u(x))=[\underline{g_x}(u(x)),\overline{g_x}(u(x))]\subseteq[\underline{f}(u(x)),\overline{f}(u(x))] \quad \mbox{for a.a.} \ x\in\Omega, $$
it turns out that $u$ also solves \eqref{prob}.
\end{proof}

\begin{thm}
\label{locregthm}
Let \hyperlink{Hf}{$({\rm H}_f)$}--\ref{Hphi} be satisfied. Suppose that $u\in K$ is a solution to \eqref{prob} fulfilling
\begin{equation}
\label{oversubsol}
u \geq \underline{u} \quad \mbox{in} \ \Omega \quad \mbox{for some positive} \ \underline{u}\in C^{1,\alpha}_0(\overline{\Omega}).
\end{equation}
Then $u\in C^{1,\alpha}_\loc([u>\phi])$, for some $\alpha\in(0,1)$, and
strongly solves
\begin{equation}
\label{farcontacteq}
-\Delta_p u = f(u) \quad \mbox{in} \ [u>\phi].
\end{equation}
\end{thm}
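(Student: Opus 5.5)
Let $\A:=[u>\phi]$, which is open. Since $u$ solves \eqref{prob} with some $\eta\in W^{-1,p'}(\Omega)\cap L^1_\loc(\Omega)$ satisfying $\underline{f}(u)\le\eta\le\overline{f}(u)$, Proposition \ref{kinderstamp} gives $-\Delta_p u=\eta+\mu$ with $\mu\ge0$ and $\supp\mu\subseteq[u=\phi]$. Hence, as in the proof of that proposition, $-\Delta_p u=\eta$ in $\mathcal{D}'(\A)$.

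The first step is to bound $\eta$ locally in $\A$. Fix a compact $C\subset\A$. Since $u\ge\underline{u}\ge\sigma d$ by \eqref{oversubsol} and \eqref{hopf}, we have $u\ge\sigma\,{\rm dist}(C,\partial\Omega)=:m_C>0$ on $C$. By \ref{localbound}, \ref{singular} and \ref{sublinear}, $\overline{f}(s)\le c_1 s^{p-1}+c_2$ for $s\ge m_C$. Therefore $0\le\eta\le c_1u^{p-1}+c_2$ on $C$.

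To get $\eta\in L^\infty_\loc(\A)$ we need $u\in L^\infty_\loc(\A)$. I would obtain this by a local Moser iteration (De Giorgi--Stampacchia) for the equation $-\Delta_p u=\eta$ in balls compactly contained in $\A$. The right-hand side has growth $c_1u^{p-1}+c_2$, which is a natural growth condition, so the local boundedness results of Serrin type apply. Alternatively, one can compare globally: $u$ is a supersolution-type object, and the sublinear growth \ref{sublinear} yields an $L^\infty(\Omega)$ bound through truncation tests $v=u-(u-k)_+\in K$ (admissible when $k\ge\|\phi_+\|$ on the relevant set). Since $\phi$ may be unbounded, I prefer the local route.

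With $\eta\in L^\infty_\loc(\A)$, the interior regularity of DiBenedetto/Tolksdorf gives $u\in C^{1,\alpha}_\loc(\A)$. I expect this boundedness step to be the main technical point; the $\alpha$ may depend on the compact set, and a uniform exponent can be chosen on each compact.

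Next, since $\eta\in L^\infty_\loc(\A)\subseteq L^2_\loc(\A)$, Proposition \ref{stronglocal} applied in $\A$ gives $|\nabla u|^{p-2}\nabla u\in W^{1,2}_\loc(\A)$. Hence $-\Delta_p u$ is an $L^2_\loc$ function equal to $\eta$ a.e. in $\A$, and $\Delta_p u=0$ a.e. on $u^{-1}(\D_f)$ because $|\D_f|=0$ by \ref{zeromeasure}.

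To identify $\eta$ with $f(u)$, split $\A$ into two parts. On $\A\setminus u^{-1}(\D_f)$, $f$ is continuous at $u(x)$, so $\underline{f}(u(x))=\overline{f}(u(x))=f(u(x))$ and thus $\eta=f(u)$. On $u^{-1}(\D_f)$, we have $\eta=-\Delta_p u=0$ a.e., so $\underline{f}(u)\le0$, i.e.\ $\underline{f}(u(x))=0$ since $f\ge0$. Then \ref{fzero} gives $f(u(x))=0=\eta(x)$. Therefore $-\Delta_p u=f(u)$ a.e. in $\A$. Together with $u>0$ there and the regularity of the flux, this is a strong solution in the sense of Definition \ref{strongsoldef}.
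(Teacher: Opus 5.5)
Your proposal is correct and follows essentially the same route as the paper: localize via Proposition~\ref{kinderstamp} to get $-\Delta_p u=\eta$ in $[u>\phi]$, bound $\eta$ on compact subsets using $u\ge\underline{u}$ and the growth of $\overline{f}$, invoke DiBenedetto-type $C^{1,\alpha}$ regularity, and identify $\eta=f(u)$ a.e.\ through Proposition~\ref{stronglocal}, \ref{zeromeasure}, and \ref{fzero}. Your explicit Serrin/Moser step giving $u\in L^\infty_\loc$ is a useful detail that the paper leaves implicit in its citation of \cite{DB}. For the lower bound on a compact $C$ you need only $\min_C\underline{u}>0$, which follows from the continuity and positivity of $\underline{u}$; you do not need \eqref{hopf}, which the paper establishes for the specific $\underline{u}$ of Proposition~\ref{subsollemma}.
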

\begin{proof}
Observe that $\underline{u}\in C^{1,\alpha}_0(\overline{\Omega})$ and $\underline{u}>0$ in $\Omega$ imply that $\underline{u}\geq \sigma d$ in $\Omega$ for some $\sigma>0$, owing to \cite[Proposition 2.3]{GM}. Pick any open set $\A$ such that $\overline{\A}\subseteq [u>\phi]$ and notice that ${\rm dist}(\A,\partial\Omega)>0$, since $[u>\phi]\subseteq\Omega$ is open. Thus, according to Proposition \ref{kinderstamp} and \eqref{overfgrowth}, we get
\begin{equation}
\label{localsol}
-\Delta_p u = \eta \quad \mbox{in} \ \A,
\end{equation}
with $\eta$ satisfying
\begin{equation}
\label{localineq}
0\leq \underline{f}(u)\leq \eta\leq \overline{f}(u)\leq c_1 u^{p-1}+c_2 d^{-\gamma} \leq c_1 u^{p-1} + c_2({\rm dist}(\A,\partial\Omega))^{-\gamma} \quad \mbox{a.e. in} \ \A.
\end{equation}
Hence, nonlinear regularity theory \cite{DB} guarantees $u\in C^{1,\alpha}_\loc(\A)$, while \cite{CM} ensures that $u$ strongly solves \eqref{localsol}. Accordingly, Proposition \ref{stronglocal}, jointly with \eqref{localineq} and \ref{zeromeasure}, reveals that
\begin{equation*}
0\leq \underline{f}(u) \leq \eta = -\Delta_p u = 0 \quad \mbox{a.e. in} \ u^{-1}(\D_f)\cap \A,
\end{equation*}
so $f(u)=0$ by \ref{fzero}, forcing $\eta=f(u)=0$ a.e. in $u^{-1}(\D_f)\cap \A$. On the other hand, through \eqref{localineq} one has
\begin{equation*}
\eta=f(u) \quad \mbox{a.e. in} \ \A\setminus u^{-1}(\D_f).
\end{equation*}
Consequently $\eta=f(u)$ a.e.\,in $\A$. Since $\A$ was arbitrary, then $u\in C^{1,\alpha}_\loc([u>\phi])$ and it strongly solves \eqref{farcontacteq}.
\end{proof}

\begin{thm}
\label{globregthm}
Let \hyperlink{Hf}{$({\rm H}_f)$}--\ref{Hphi} be satisfied. Suppose that $u\in K$ is a solution to \eqref{prob} fulfilling \eqref{oversubsol}. Suppose also that there exist $q>N$ and $\Phi\in L^q(\Omega)$ such that $-\Delta_p \phi_+ \leq \Phi$ in $\Omega$. Then $u\in C^{1,\alpha}(\overline{\Omega})$ for some $\alpha\in(0,1)$.
\end{thm}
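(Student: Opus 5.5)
The plan is to use the Lewy–Stampacchia inequality of Proposition \ref{lewystampacchia} to show that $-\Delta_p u$ is represented by a function with a singular but controlled bound, and then to invoke the global $C^{1,\alpha}$ regularity for singular problems (as in \cite{GM}) used in the obstacle-free case. Let $\eta$ be the function associated with $u$ in \eqref{prob}. Because of \eqref{oversubsol}, $\underline{u}\geq\sigma d$ in $\Omega$ for some $\sigma>0$ (via \cite[Proposition 2.3]{GM}). Therefore, exactly as in \eqref{localineq} and \eqref{overfgrowth}, we have $0\leq\eta\leq\overline{f}(u)\leq c_1u^{p-1}+c_2d^{-\gamma}$ a.e.\ in $\Omega$. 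Then \eqref{lewy} gives
$$0\leq\eta\leq-\Delta_p u\leq \max\{-\Delta_p\phi_+,\eta\}\leq \max\{\Phi,\eta\}\leq |\Phi|+c_1u^{p-1}+c_2d^{-\gamma}$$
in $W^{-1,p'}(\Omega)$. The third inequality follows from the Riesz–Kantorovich formula, since $\langle-\Delta_p\phi_+,z\rangle\leq\int\Phi z$ for $z\geq0$. Thus $\mu:=-\Delta_p u$ is a non-negative distribution bounded above by an $L^1_{\rm loc}$ function. Hence $\mu$ is itself a function $\zeta$ with $0\leq\zeta\leq |\Phi|+c_1u^{p-1}+c_2d^{-\gamma}$ a.e.: a non-negative distribution is a Radon measure, and domination by an absolutely continuous one forces absolute continuity.

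Next I need $u\in L^\infty(\Omega)$ so that the term $u^{p-1}$ is harmless. For this I would run a Moser iteration or De Giorgi truncation on $-\Delta_p u=\zeta$, with the right-hand side bounded by $c_1u^{p-1}+h$, where $h:=|\Phi|+c_2d^{-\gamma}$. I expect $h\in L^r(\Omega)$ for some $r>N/p$, or at least $h\in W^{-1,p'}$ in a form usable by the standard boundedness arguments: $d^{-\gamma}\in L^{s}$ for every $s<1/\gamma$, and Hardy–Sobolev controls the singular part near the boundary. Since $c_1<\lambda_1$, the $u^{p-1}$ term can be absorbed in the usual way (test with $(u-k)_+$ and use Sobolev). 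Alternatively, and more cleanly, I would compare $u$ with the solution $w$ of $-\Delta_p w=c_1w^{p-1}+h$, or simply cite the $L^\infty$ bound already used in \cite{GM} for singular equations with sub-$\lambda_1$ growth.

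Once $u\in L^\infty$, we have $0\leq -\Delta_p u\leq |\Phi|+C+c_2d^{-\gamma}$, with $\Phi\in L^q$, $q>N$, and $\gamma\in(0,1)$. The global regularity result for the $p$-Laplacian with right-hand side dominated by $C(1+d^{-\gamma})$ plus an $L^q$, $q>N$, term then yields $u\in C^{1,\alpha}(\overline{\Omega})$. This is the result employed in \cite{GM}, coming from Hai-type or Lieberman-type estimates, and I would quote it, possibly splitting $u=u_1+u_2$ by linearity only at the level of data bounds, not of solutions. If the cited theorem is stated only for $|{-\Delta_p u}|\leq C d^{-\gamma}$, I would note that $L^q$ with $q>N$ is handled by \cite{DB} together with Lieberman's boundary estimates, and that these combine.

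The main obstacle is the second step together with the precise form of the final citation: I must ensure the available boundary $C^{1,\alpha}$ theorem accepts a right-hand side mixing $L^q$, $q>N$, with $d^{-\gamma}$. The identification of $\max\{-\Delta_p\phi_+,\eta\}$ as a function is routine by comparison.
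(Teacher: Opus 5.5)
\paragraph{Verdict: genuine gap.}
Your skeleton matches the paper's: the Lewy--Stampacchia upper bound, identifying $-\Delta_p u$ with a function $\zeta$, an $L^\infty$ bound, then global $C^{1,\alpha}$ regularity. The first part is fine. Writing $\zeta=\Gamma(x)\,[|\Phi|+c_1u^{p-1}+c_2d^{-\gamma}]$ with $0\le\Gamma\le 1$ is essentially what the paper does. However, the two analytic steps that carry the proof are left open, and the routes you sketch for them fail as stated.

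\emph{The $L^\infty$ bound.} Since $d^{-\gamma}\in L^s(\Omega)$ only for $s<1/\gamma$, the condition $h\in L^r$ with $r>N/p$ fails whenever $\gamma\ge p/N$ (e.g.\ $N=3$, $p=2$, $\gamma=0.8$). The fallback $d^{-\gamma}\in W^{-1,p'}(\Omega)$ via Hardy--Sobolev is only the energy-space condition, and it does not give boundedness.

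\emph{Boundary regularity.} A theorem for data $\lesssim d^{-\gamma}$ and a theorem for data in $L^q$ cannot be ``combined'': $-\Delta_p$ is nonlinear, so there is no superposition. ``Splitting at the level of data bounds'' has no content without a concrete device. You correctly flagged this as the main obstacle, but the proposal does not resolve it.

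\paragraph{The missing idea.} Move the singular term into the flux using the \emph{linear} Laplacian. Let $U\in C^{1,\alpha}_0(\overline{\Omega})$ solve $-\Delta U=c_2\Gamma d^{-\gamma}$ in $\Omega$, $U=0$ on $\partial\Omega$. Regularity up to the boundary follows from \cite[Lemma 3.1]{H}, because $\Gamma$ is bounded and $\gamma<1$. This makes explicit that $d^{-\gamma}$ is the divergence of a bounded, even H\"older, vector field. Then $u$ solves
$$ -{\rm div}\left(|\nabla u|^{p-2}\nabla u-\nabla U(x)\right)=\Gamma(x)\left[|\Phi(x)|+c_1u^{p-1}\right] \quad \mbox{in} \ \Omega. $$
This is a divergence-form equation whose flux depends H\"older-continuously on $x$ and whose right-hand side is no longer singular. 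Now \cite[Theorem 3.1]{BCM} gives $u\in L^\infty(\Omega)$, so the right-hand side becomes a function in $L^q(\Omega)$ with $q>N$. Finally \cite[Theorem 1]{L} (see also \cite[Corollary 1.4]{A}) yields $u\in C^{1,\alpha}(\overline{\Omega})$. This single device handles both the boundedness and the boundary-regularity difficulties.
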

\begin{proof}
Replacing $\Phi$ with $\Phi_+$ if necessary, we may assume $\Phi$ to be non-negative.

Reasoning as in Theorem \ref{locregthm}, through \eqref{overfgrowth} we get
$$ 0\leq \eta \leq \overline{f}(u) \leq c_1 u^{p-1} + c_2 d^{-\gamma} \quad \mbox{in} \ \Omega. $$
Owing to Proposition \ref{lewystampacchia} we infer
$$ \eta \leq -\Delta_p u \leq \max\{-\Delta_p \phi_+,\eta\} \leq \Phi + c_1 u^{p-1} + c_2 d^{-\gamma} \quad \mbox{in} \ \Omega. $$
In particular, there exists a non-negative $\Gamma\in L^\infty(\Omega)$ such that $u$ solves
\begin{equation}
\label{Gammaeq1}
-\Delta_p u = \Gamma(x)\left[\Phi(x) + c_1 u^{p-1} + c_2 d(x)^{-\gamma}\right] \quad \mbox{in} \ \Omega.
\end{equation}
Let us consider the unique solution $U\in C^{1,\alpha}_0(\overline{\Omega})$ to
\begin{equation*}
\left\{
\begin{alignedat}{2}
-\Delta U &= c_2\Gamma(x)d(x)^{-\gamma} \quad &&\mbox{in} \ \Omega, \\
U&=0 \quad &&\mbox{on} \ \partial\Omega,
\end{alignedat}
\right.
\end{equation*}
whose existence and uniqueness are ensured by Minty-Browder's theorem (since $\Gamma d^{-\gamma}\in W^{-1,p'}(\Omega)$), while its regularity is guaranteed by \cite[Lemma 3.1]{H}. Then \eqref{Gammaeq1} can be rewritten as
\begin{equation}
\label{Gammaeq2}
-{\rm div}\left(|\nabla u|^{p-2}\nabla u-\nabla U(x)\right) = \Gamma(x)\left[\Phi(x) + c_1 u^{p-1}\right] \quad \mbox{in} \ \Omega.
\end{equation}
Thus, \cite[Theorem 3.1]{BCM} entails $u\in L^\infty(\Omega)$, so that \eqref{Gammaeq2} becomes
\begin{equation*}
-{\rm div}\left(|\nabla u|^{p-2}\nabla u-\nabla U(x)\right) = \Upsilon(x) \quad \mbox{in} \ \Omega,
\end{equation*}
for a suitable non-negative $\Upsilon\in L^q(\Omega)$. Hence, owing to \cite[Theorem 1]{L} (see also \cite[Corollary 1.4]{A}), we deduce that $u\in C^{1,\alpha}(\overline{\Omega})$.
\end{proof}
\begin{rmk}
If \ref{subsol} is replaced by the stronger assumption
$$ \liminf_{s\to 0^+} f(s)>0, $$
then \eqref{oversubsol} is always satisfied. Indeed, arguing as in \cite[Lemma 2.9]{CGL}, we can choose $\eps,\delta>0$ such that
$$ f(s)>\eps \quad \mbox{for all} \ s\in(0,\delta). $$
Thus, reasoning as in Lemma \ref{subsollemma}, there exists a non-negative $\underline{u}\in C^{1,\alpha}_0(\overline{\Omega})$ such that $\|\underline{u}\|_\infty<\min\left\{\left(\frac{\eps}{\lambda_1}\right)^{\frac{1}{p-1}},\delta\right\}$ and
$$ -\Delta_p \underline{u}=\lambda_1\underline{u}^{p-1}\leq \eps \leq \essinf_{s\in\left(0,\delta\right)} \underline{f}(s) \quad \mbox{in} \ \Omega. $$
Thus, choosing $v:=u+(\underline{u}-u)_+\in K$ in \eqref{prob} we get
\begin{equation*}
\begin{aligned}
\int_\Omega |\nabla u|^{p-2}\nabla u\nabla (\underline{u}-u)_+ \dx &\geq \langle \eta,(\underline{u}-u)_+ \rangle \geq \int_\Omega \underline{f}(u)(\underline{u}-u)_+ \dx \\
&\geq \eps \int_\Omega (\underline{u}-u)_+ \dx \geq \int_\Omega |\nabla\underline{u}|^{p-2}\nabla\underline{u}\nabla(\underline{u}-u)_+ \dx,
\end{aligned}
\end{equation*}
whence $u\geq \underline{u}$ in $\Omega$, using the monotonicity of the $p$-Laplacian as in Theorem \ref{auxexistence}.
\end{rmk}

Now we are able to prove Theorem \ref{mainthm}.

\begin{proof}[Proof of Theorem \ref{mainthm}]
Theorem \ref{auxexistence} provides a solution $u\in K$ to \eqref{prob} fulfilling \eqref{oversubsol}. Consequently, Theorems \ref{locregthm}--\ref{globregthm} hold.
\end{proof}

\section*{Acknowledgments}
\noindent
The authors warmly thank Sunra Mosconi and Nikolaos S. Papageorgiou for the fruitful discussions about some topics of the present research. \\
The authors are members of the {\em Gruppo Nazionale per l'Analisi Matematica, la Probabilit\`a e le loro Applicazioni} (GNAMPA) of the {\em Istituto Nazionale di Alta Matematica} (INdAM). \\
The second author acknowledges the support of the INdAM-GNAMPA project ``Classificazione delle soluzioni di equazioni evolutive non locali'' (CUP E53C25002010001).

\end{document}